	\pgfplotsset{compat=newest}
\newcommand{\R}{\mathbb R}
\newcommand{\im}{\operatorname{Im}}
\newcommand{\sinc}{\operatorname{sinc}}
\newcommand{\PP}{\mathsf{P}}
\newcommand{\DDj}{\mathsf{D}^j}
\newcommand{\DDt}{\widetilde{\mathsf{D}}}
\newcommand{\DDe}{\mathsf{D}}
\newcommand{\FT}{\mathcal{T}}
\newcommand{\IFT}{\mathcal{I}}
\newcommand{\ITDT}{\operatorname{\mathcal{I}_\mathrm{disc}}}
\newcommand{\FTDT}{\operatorname{\mathcal{T}_\mathrm{disc}}}
\newcommand{\q}{q_{0}}
\newcommand{\On}{\Omega_{0}}
\newcommand{\M}{\mathsf{M}}
\newcommand{\infint}{\int_{-\infty}^\infty}
\numberwithin{equation}{section}
\newtheorem{theorem}{Theorem}[section]
\newtheorem{proposition}[theorem]{Proposition}
\newtheorem{lemma}[theorem]{Lemma}
\newtheorem*{thm*}{Theorem}
\newtheorem*{lemma*}{Lemma}
\newtheorem*{prop*}{Proposition}
\theoremstyle{definition}
\newtheorem{dfn}[theorem]{Definition}
\newtheorem{example}[theorem]{Example}
\newtheorem*{dfn*}{Definition}
\theoremstyle{remark}
\newtheorem{remark}[theorem]{Remark}
\newtheorem*{rmk}{Remark}
\newtheorem*{rmk1}{Fourier integral operators}
\newtheorem*{rmk2}{Initial conditions}
\newtheorem*{rmk3}{Backward and forward type schemes}
\newtheorem*{rmk4}{Nonmatching finite difference schemes}
\let\save@mathaccent\mathaccent
\newcommand*\if@single[3]{%
  \setbox0\hbox{${\mathaccent"0362{#1}}^H$}%
  \setbox2\hbox{${\mathaccent"0362{\kern0pt#1}}^H$}%
  \ifdim\ht0=\ht2 #3\else #2\fi
  }
\newcommand*\rel@kern[1]{\kern#1\dimexpr\macc@kerna}
\newcommand*\widebar[1]{\@ifnextchar^{{\wide@bar{#1}{0}}}{\wide@bar{#1}{1}}}
\newcommand*\wide@bar[2]{\if@single{#1}{\wide@bar@{#1}{#2}{1}}{\wide@bar@{#1}{#2}{2}}}
\newcommand*\wide@bar@[3]{%
  \begingroup
  \def\mathaccent##1##2{%
    \let\mathaccent\save@mathaccent
    \if#32 \let\macc@nucleus\first@char \fi
    \setbox\z@\hbox{$\macc@style{\macc@nucleus}_{}$}%
    \setbox\tw@\hbox{$\macc@style{\macc@nucleus}{}_{}$}%
    \dimen@\wd\tw@
    \advance\dimen@-\wd\z@
    \divide\dimen@ 3
    \@tempdima\wd\tw@
    \advance\@tempdima-\scriptspace
    \divide\@tempdima 10
    \advance\dimen@-\@tempdima
    \ifdim\dimen@>\z@ \dimen@0pt\fi
    \rel@kern{0.6}\kern-\dimen@
    \if#31
      \overline{\rel@kern{-0.6}\kern\dimen@\macc@nucleus\rel@kern{0.4}\kern\dimen@}%
      \advance\dimen@0.4\dimexpr\macc@kerna
      \let\final@kern#2%
      \ifdim\dimen@<\z@ \let\final@kern1\fi
      \if\final@kern1 \kern-\dimen@\fi
    \else
      \overline{\rel@kern{-0.6}\kern\dimen@#1}%
    \fi
  }%
  \macc@depth\@ne
  \let\math@bgroup\@empty \let\math@egroup\macc@set@skewchar
  \mathsurround\z@ \frozen@everymath{\mathgroup\macc@group\relax}%
  \macc@set@skewchar\relax
  \let\mathaccentV\macc@nested@a
  \if#31
    \macc@nested@a\relax111{#1}%
  \else
    \def\gobble@till@marker##1\endmarker{}%
    \futurelet\first@char\gobble@till@marker#1\endmarker
    \ifcat\noexpand\first@char A\else
      \def\first@char{}%
    \fi
    \macc@nested@a\relax111{\first@char}%
  \fi
  \endgroup
}
\def\@setauthors{%
  \begingroup
  \def\thanks{\protect\thanks@warning}%
  \trivlist
  \centering\large \@topsep30\p@\relax
  \advance\@topsep by -\baselineskip
  \item\relax
  \author@andify\authors
  \def\\{\protect\linebreak}%
  \authors%
  \ifx\@empty\contribs
  \else
    ,\penalty-3 \space \@setcontribs
    \@closetoccontribs
  \fi
  \endtrivlist
  \endgroup
}
\def\@settitle{\begin{center}%
  \baselineskip14\p@\relax
    \normalfont\LARGE
  \@title
  \end{center}%
}
\pgfplotsset{compat=newest}
\begin{document}

\date{\today}

\title[Removing numerical dispersion from linear evolution equations]{Removing numerical dispersion from linear evolution equations}

\author[J. Wittsten]{Jens Wittsten}
\address[Jens Wittsten]{Centre for Mathematical Sciences, Lund University, Sweden and Department of Engineering, University of Bor{\aa}s, Sweden}
\email{jensw@maths.lth.se}

\author[E. Koene]{Erik F. M. Koene}
\address[Erik F. M. Koene]{Institute of Geophysics, ETH-Z{\"u}rich, Switzerland}
\email{erik.koene@erdw.ethz.ch}

\author[F. Andersson]{Fredrik Andersson}
\address[Fredrik Andersson]{Institute of Geophysics, ETH-Z{\"u}rich, Switzerland}
\email{fredrik.andersson@erdw.ethz.ch}

\author[J. Robertsson]{Johan O. A. Robertsson}
\address[Johan O. A. Robertsson]{Institute of Geophysics, ETH-Z{\"u}rich, Switzerland}
\email{johan.robertsson@erdw.ethz.ch}

\begin{abstract}
We describe a method for removing the numerical errors in the modeling of linear evolution equations that are caused by approximating the time derivative by a finite difference operator. The method is based on integral transforms realized as certain Fourier integral operators, called time dispersion transforms, and we prove that, under an assumption about the frequency content, it yields a solution with correct evolution throughout the entire lifespan. We demonstrate the method on a model equation as well as on the simulation of elastic and viscoelastic wave propagation.
\end{abstract}

\subjclass[2010]{65M06 (primary), 35A22, 35Q86, 35S30 (secondary)}

\keywords{Evolution equation, finite difference operator, numerical dispersion, time dispersion transform, wave propagation}

\maketitle

\section{Introduction}

The difference between a continuous differential equation and its discretized counterpart is a source of numerical artifacts. Generally, the discretized system differs from the intended system in its dispersive and dissipative properties, so errors in the computation are referred to as \textit{numerical dispersion} and \textit{numerical dissipation} \cite{anderson1984}.
Here dispersion refers to a process in which energy separates into its component frequencies as the solution evolves, while dissipation refers to damping of energy during the evolution. Numerical dispersion thus refers to phase errors, while numerical dissipation refers to amplitude errors. The combined effect of the two numerical errors is sometimes described as \textit{numerical diffusion}.

Numerical diffusion errors are typically studied through the local truncation error, i.e., the consistency between the discrete and continuous equation in terms of the discrete step size. If the method is stable, the Lax equivalence theorem \cite{lax1956survey} implies that the discretized equation converges to its continuous counterpart. As a consequence, the majority of the numerical methods for differential equations are designed with the intent of minimizing the local truncation error, with the expectation that the global error will then also be small. Examples are high-order accurate derivative schemes \cite{holberg1987computational}, \cite{fornberg1998practical} and high-order accurate integration schemes such as Runge-Kutta or ADER (Arbitrary high order schemes using DERivatives) \cite{hoffmann2000computational}, \cite{titarev2002ader}, \cite{hairer2006geometric}, \cite{leveque2007finite}. The high-order techniques typically lead to more accurate results compared to low-order methods, but come with a trade-off in increased computational cost.

In this paper we analyze numerical dispersion errors not through the local error but by comparing numerical solutions to true solutions. 
To illustrate, consider the evolution equation
\begin{align}\label{eq:introevo}
u'(t)+Au(t)&=f(t),\quad t>0,\\
u(t)&\equiv0,\quad t\le0,
\label{eq:introinitialcondition}
\end{align}
where $A$ is a linear operator independent of time $t$. Arguing formally we find by taking Fourier transforms with respect to $t$ of both sides of \eqref{eq:introevo} that
\begin{equation}\label{eq:introFourier1}
2\pi i \omega\widehat u(\omega)+A\widehat u(\omega)=\widehat f(\omega),
\end{equation}
where $\widehat f(\omega)=\int_{-\infty}^\infty e^{-2\pi i t\omega}f(t)\,dt$. Suppose now that we want to simulate  the solution $u$ by obtaining a solution $v$ of the finite difference equation 
\begin{align}\label{eq:introFD}
\frac{v(t+\Delta t)-v(t-\Delta t)}{2\Delta t}+Av(t)&=f(t),\quad t>0,\\
v(t)&\equiv0,\quad t\le0.
\end{align}
The finite difference approximation of the time derivative introduces an error which can be expressed by taking Fourier transforms and noting that $\widehat v$ will not satisfy \eqref{eq:introFourier1} but instead
\begin{equation}\label{eq:introFourier2}
2\pi i q(\omega)\widehat v(\omega)+A\widehat v(\omega)=\widehat f(\omega),
\end{equation}
where 
$$
q(\omega)=\frac{1}{2\pi i}\frac{e^{2\pi i \omega\Delta t}-e^{-2\pi i\omega\Delta t}}{2\Delta t}=\frac{\sin(2\pi\omega\Delta t)}{2\pi\Delta t}.
$$
Such functions appear as (part of) the so-called amplification factor in von Neumann stability analysis of spatial finite difference operators \cite{leveque2007finite}, but here it is rotated from the spatial side into the time direction where it will be called a {\it phase shift function}. The connection is explored in Remark \ref{rmk:vonNeumann} below. As the name signifies, comparison between $q(\omega)$ and $\omega$ allows for a description of the numerical dispersion (i.e., phase) errors resulting from the finite difference approximation of the time derivative.
We now make the simple observation that if \eqref{eq:introFourier1} is evaluated at $q(\omega)$ instead of at $\omega$ then $\widehat u$ is seen to satisfy
\begin{equation*}
2\pi i q(\omega)\widehat u(q(\omega))+A\widehat u(q(\omega))=\widehat f(q(\omega)).
\end{equation*}
By comparing this equation with \eqref{eq:introFourier2} we find that if $v$ is to correctly simulate the evolution of $u$ then the governing equation \eqref{eq:introFD} for $v$ should be modified by replacing $f(t)$ with the function $g(t)$ that has Fourier transform $\widehat g(\omega)=\widehat f(q(\omega))$, while $u$ and $v$ should satisfy the relation $\widehat v(\omega)=\widehat u(q(\omega))$, if possible. Since $q$ is periodic, one obstruction is that unless \eqref{eq:introevo}--\eqref{eq:introinitialcondition} leads to a solution $u$ which is bandlimited in time,
the relation $\widehat v(\omega)=\widehat u(q(\omega))$ will only capture part of the frequency content of the sought solution $u$. (Here $u$ is said to be bandlimited in time when $\widehat u$ has compact support.) As we shall see, this obstruction can be made negligible also for non-bandlimited $u$ as long as $\widehat u$ and $\widehat f$ has sufficiently rapid decay at infinity. The transform $f\mapsto g$ which modifies the finite difference equation is called the {\it forward time dispersion transform} (FTDT) and the transform $v\mapsto u$ which (ideally) turns a solution of the altered finite difference equation into a solution of the original evolution equation is called the {\it inverse time dispersion transform} (ITDT).   
In practice, there will typically be stability criteria placing an upper bound on the step size $\Delta t$. Throughout the paper we will assume that $\Delta t$ is chosen with such constraints in mind, but as we will demonstrate, one benefit of the proposed method is that it allows $\Delta t$ to be chosen at such upper bounds while still yielding very accurate results. This is true even for low order finite difference schemes, thus providing a new and computationally cheap way to obtain good simulation results.

The method of using dispersion transforms to correct for numerical errors caused by finite difference approximations of time derivatives was introduced in geophysics for the wave equation, see Stork \cite{stork2013eliminating} for the original approach. It was then further developed and improved by many authors 
\cite{wang2015finite}, 
\cite{gao2016third}, 
\cite{koene2017removing}, 
\cite{koene2018eliminating}, 
\cite{amundsen2019elimination}, 
\cite{koene2019eliminating}, 
\cite{mittet2019second} 
and seems to now have settled on the definitions of the transforms presented here. For a review of the development we refer to Koene {\it et al.}~\cite{koene2018eliminating}. We mention that in the geophysical literature, usage of the FTDT and ITDT is usually described as applying a pre-computation and a post-computation filter, respectively. Section \ref{section:theory} presents a rigorous mathematical treatment of the method and shows using a unified approach that it has applications to a large class of linear evolution equations. After formally introducing the dispersion transforms (Definition \ref{def:transforms}) we use them to establish the correspondence between an evolution equation and its finite difference equivalent hinted at in the example above (Theorem \ref{thm:evo}), and show that they can be interpreted as Fourier integral operators. In \S\ref{ss:discreteITDT} we provide discrete versions of the transforms and discuss implementation. Our main result shows that the proposed method of using the discrete dispersion transforms as pre- and post-computational filters yields a numerical solution that correctly models the desired evolution for any length of time (see Theorem \ref{thm:mainthm} for the precise statement). Section \ref{section:numsim} demonstrates the theoretic results by conducting numerical tests on a model equation where the solution obtained by the proposed method compares to the analytic solution with double precision accuracy (see Figure \ref{fig1}). 
The approximation error is small as long as the frequency content of the source term is negligible outside a window defined by the range of the phase function $q$ (see Figure \ref{fig2}). The size of this window becomes arbitrarily large as $\Delta t\to0$.

One shortcoming of the method is the need to store the solution for all moments in time, which for two- or three-dimensional problems may require a very large memory capacity. However, even though simulations might be carried out over a global domain the solution is often only desired at a small subset to which the corrections by means of the dispersion transforms can be restricted, at a cost that is far smaller than running the simulation with a globally large accuracy. We demonstrate such an application in Section \ref{section:viscoelastic}, where we perform elastic and viscoelastic wave simulation for the earth's subsurface, but only use the dispersion transforms to correct the resulting ground motion at certain points on the boundary (the source and receiver positions). The viscoelastic simulations show that the method can deal even with dissipative wave physics while still yielding highly accurate solutions.

The paper is concluded with three appendices. In Appendix A we have gathered results of tangential or supplementary nature referenced in the main text. In Appendix B one can find the implementation of the finite difference scheme used in the viscoelastic wave simulations. Finally, in Appendix C we provide codes for implementing the dispersion transforms in MATLAB.

\section{Numerical dispersion in evolution equations}\label{section:theory}
Let $ X\subset\R^d$ and $u=(u_1,\ldots,u_K)$ be a vector valued function of $(t,x)\in \R\times X$. Introduce the $K\times K$ system of differential operators
$$
\mathscr P_iu(t,x)=\partial_t^{n_i}u_i(t,x)+\sum_{j=0}^{n_i-1}\sum_{k=1}^K 
\partial_t^j
L_{ijk} u_k(t,x),\quad 1\le i\le K,
$$
where $\partial_t^ju_k(t,x)=\partial^ju_k(t,x)/\partial t^j$ and the $L_{ijk}$ are linear spatial operators depending on $x\in X$ but independent of time $t$ so that $\partial_t$ and $L_{ijk}$ commute. Consider the evolution equation
in $\R\times X$ given by
\begin{align}\label{evo}
\mathscr P_iu(t,x)&=f_i(t,x),\quad 1\le i\le K,\\
 u(t,x) &\equiv 0,\quad t\le t_0.
\label{init}
\end{align}
Since the system is translation invariant in $t$ we may without loss of generality assume that $t_0=0$ below. 
We will assume that the problem is well posed and that, depending on the spatial operators $L_{ijk}$, appropriate spatial conditions are imposed to ensure a unique solution. For an extensive background on partial differential equations we refer to H{\"o}rmander \cite{hormander1983linear} and Evans \cite{evans2010partial}.

When solving \eqref{evo}--\eqref{init} by means of finite difference methods, numerical dispersion errors inevitably occur as a result of approximating the time derivatives with finite differences. The purpose of this paper is to establish a method by which to alter the chosen finite difference system and capture the correct time evolution of the solution $u$ to \eqref{evo}--\eqref{init}.

In this work, the exact structure of the spatial operators will not be essential. In the applications we have in mind, each $L_{ijk}$ will typically be a differential operator in $x$ with coefficients that depend on $x$ but not on $t$, or (going one step further in the direction of obtaining a numerical solution) each $L_{ijk}$ will be the result of discretizing such a differential operator by means of some numerical scheme. To isolate the effects of time dispersion we will in the latter case assume that any resulting space dispersion errors are essentially fully decoupled from the time dispersion errors,
and that appropriate stability criteria 
that may govern the possible size of the time step $\Delta t$ are satisfied. We will only be concerned with finite difference schemes which are numerically stable and depend continuously on the initial data. A comprehensive treatment of finite difference methods can be found in LeVeque \cite{leveque2007finite}. 

\begin{example}\label{ex:waveq}
As a prototype, consider the scalar, one-dimensional acoustic wave equation in heterogeneous media:
$$
\partial_t^2 u(t,x)-c(x)\partial_x^2u(t,x)=f(t,x),\quad t>0,\quad 0\le x\le 1,
$$
where $u$ is the acoustic pressure and the propagation velocity $c(x)$ depends on position $x$. Assume Dirichlet boundary values $u(t,0)=u(t,1)=0$. This is an example of \eqref{evo} with $K=1$ and $\mathscr P u$ of the form $\mathscr Pu=\partial_t^2u+L_0u+\partial_t L_1u$, where $L_0=-c(x)\partial_x^2$ and $L_1\equiv0$. On the other hand, if we want to solve this equation numerically, we may choose to discretize the equation in $x$ by sampling at $x=x_j$ where $x_j=j\Delta x$ for $j=0,1,\ldots,N+1$, and replace $\partial_x^2$ with a finite difference, say
$$
\partial_t^2u(t,x_j)-c(x_j)\frac{u(t,x_{j-1})-2u(t,x_j)+u(t,x_{j+1})}{\Delta x^2}=f(t,x_j)
$$
for $1\le j\le N$, with the boundary values dictating $u(t,x_0)=u(t,x_{N+1})=0$. 
This is also an example of \eqref{evo} with $K=1$ and $\mathscr P u$ of the form $\mathscr Pu=\partial_t^2u+L_0u+\partial_t L_1u$, where the unknown $u$ is now the $t$-dependent vector $u(t)=(u(t,x_1),\ldots,u(t,x_{N}))$ and while we still have $L_1\equiv0$, $L_0$ is now a tridiagonal matrix which factorizes as
$$
L_0=-\frac{1}{\Delta x^2}\begin{pmatrix*}[r] c(x_1)&&& \\ &c(x_2)&&\\&&\ddots&\\&&&c(x_{N})\end{pmatrix*}\begin{pmatrix*}[r] -2&1& &\\ 1&-2&1&\\&\ddots&\ddots&\ddots\\&&1&-2\end{pmatrix*}.
$$
Whenever we discuss solving this type of equation by means of a finite difference scheme in time of step size $\Delta t$, we will always assume that the corresponding Courant-Friedrich-Lewy (CFL) condition is satisfied by $\Delta t$ in terms of the interval length $\Delta x$, so that the resulting equation is numerically stable. 
\end{example}

Note that because of the initial condition \eqref{init}, the source terms $f_i$ in \eqref{evo} will have to vanish identically for $t\le0$. We will in addition assume they have sufficient decay as $t\to\infty$ and that each $L_{ijk}$ and $f_i$ are regular enough  for \eqref{evo}--\eqref{init} to admit a strong solution $u_i$, integrable with respect to $t$, such that for each $1\le i\le K$
\begin{equation}\label{regularityassumption}
t\mapsto \sup_{x\in X}\lvert u_i(t,x)\rvert\in H^{n_i}(\R).
\end{equation}
Here, $H^s=W^{s,2}$ is the usual $L^2$ Sobolev space of order $s$. 
In particular, the partial Fourier transforms $\widehat{u}_i(\omega,x)$ and $\widehat{f}_i(\omega,x)$ are assumed to be well defined and square integrable, where
$$
\widehat{u}_i(\omega,x)=\mathcal F(u_i(\cdot,x))(\omega)=\infint e^{-2\pi i t\omega}u_i(t,x)\, dt.
$$
We remark that \eqref{regularityassumption} does not automatically hold in general -- in the example \eqref{eq:introevo}--\eqref{eq:introinitialcondition} discussed in the introduction it would depend on properties of the operator $A$. In the applications we focus on in this paper, however, the energy is expected to dissipate in the region of interest, due e.g.~to damping or the geometry of the spatial domain, which makes \eqref{regularityassumption} natural.

\begin{rmk}
Realizations of the Cauchy problem \eqref{evo}--\eqref{init} can for example be found in initial value problems for:
\begin{itemize}
\item[(1)] Ordinary differential equations with constant coefficients.
\item[(2)] Heat equations, linear parabolic equations.
\item[(3)] Wave equations, linearly damped wave equations, Maxwell's equations, linear elasticity.
\item[(4)] Visco-acoustic and viscoelastic equations solved via memory variables (see \S\ref{ss:memory}).
\item[(5)] Strictly hyperbolic pseudodifferential equations, Tricomi equations.
\end{itemize}
\end{rmk}

\subsection{Finite difference system}\label{ss:FD}

Let $\DDe$ denote the finite difference operator
\begin{equation}\label{1order}
\DDe  v_i(t,x)= \sum_{n\in Z} c_{1,n} v_i(t+n\Delta t,x),
\end{equation}
where $Z$ is a finite set usually consisting of a subset of the integers or half-integers, and the coefficients  $c_{1,n}$ are chosen so that $\DDe$ becomes an approximation of the first order time derivative $\partial_t$. Introduce the finite difference operators
\begin{equation}\label{eq:defofFDsystem}
\PP_iv(t,x)=
\mathsf{D}^{n_i}v_i(t,x)+\sum_{j=0}^{n_i-1}\sum_{k=1}^K \DDj L_{ijk} v_k(t,x),\quad 1\le i\le K,
\end{equation}
corresponding to the differential operators $\mathscr P_i$ discussed above, obtained by approximating the time derivatives by means of $\DDe$. Note that the spatial operators $L_{ijk}$ thus are the same in $\mathscr P_i$ and $\PP_i$.
Here and for the majority of the paper $\DDj$ denotes the composition
$$
\DDj=\underbrace{\DDe\circ\ldots\circ\DDe}_j,
$$
which means in particular that the same scheme $\DDe$ is assumed to be used as a basis for $\DDj$ in each of the $K$ operators $\mathsf P_i$. The case of non-matching finite difference schemes is discussed briefly on page \pageref{rmk:nonmatching} below and again in \S\ref{ss:nonmatching} in the appendix.

Taking a partial Fourier transform of \eqref{1order} we observe that
\begin{equation}\label{FourierDj}
\infint e^{-2\pi i t\omega}\DDj v_i(t,x) \, dt = \bigg( \sum_n c_{1,n}  e^{2\pi i \omega \Delta t n}\bigg)^j\widehat{v}_i(\omega,x).
\end{equation}
In view of this identity and the fact that
\begin{equation*}
\mathcal F\left(\partial_t^j u_i(\cdot,x)\right)(\omega)
=(2\pi i\omega)^j\widehat u_i(\omega,x),
\end{equation*}
we define a {\it phase shift function} $q$ as
\begin{equation}\label{def:q}
q(\omega) = \frac{1}{2\pi i}\sum_n c_{1,n}  e^{2\pi i n\omega \Delta t } 
\end{equation}
so that
\begin{equation*}
\mathcal F\left(\DDj v_i(\cdot,x)\right)(\omega)
=(2\pi iq(\omega))^j\widehat v_i(\omega,x).
\end{equation*}
We will assume that $c_{1,n}$ is chosen in such a way that $q(\omega)$ is real-valued and invertible as a mapping $q:\Omega\to q(\Omega)$ 
for some subset $\Omega=\Omega(\Delta t)\subset\R$.
For a comment on the case when $q$ is not real-valued (which happens e.g., in the case of a forward Euler scheme), see the remark on page \pageref{ss:nonreal}. Note also that with respect to the normalized variable $\omega\Delta t$, the right-hand side of \eqref{def:q} is invertible for all $\omega\Delta t$ belonging to some fixed, $\Delta t$-independent set. In fact, under the natural assumption that $c_{1,n}\Delta t$ is independent of $\Delta t$, it follows that
\begin{equation}\label{eq:qnorm}
\q(\eta)=q(\eta/\Delta t)\Delta t=\frac{1}{2\pi i}\sum_n ( c_{1,n}\Delta t)  e^{2\pi i n\eta } 
\end{equation}
is a trigonometric polynomial independent of $\Delta t$.

\begin{example}\label{ex:q}
Let $\DDe$ be given by \eqref{1order}, where the index $n$ ranges over the integers, and choose coefficients $c_{1,\pm 1}=\pm 1/(2\Delta t)$ and $c_{1,n}=0$ for all other values of $n$. Then $\DDe$ is the central difference operator
$$
\DDe v(t)=\frac{v(t+\Delta t)-v(t-\Delta t)}{2\Delta t},
$$
and $q(\omega)=\sin(2\pi\omega\Delta t)/2\pi\Delta t$. It follows that $q$ is invertible for $\omega\in\Omega$ where $\Omega=[-\frac{1}{4\Delta t},\frac{1}{4\Delta t}]$. In other words, $q$ is invertible when the normalized variable $\omega\Delta t$ satisfies $\lvert\omega\Delta t\rvert\le1/4$. Moreover, $\q(\eta)=\sin(2\pi\eta)/2\pi$.
\end{example}

\begin{example}\label{ex:2ndorder}
Let $\DDe^2$ be the approximation of a second order derivative given by
\begin{equation}\label{eq:2ndorder}
\DDe^2v(t)=\frac{v(t+\Delta t)+v(t-\Delta t)-2v(t)}{\Delta t^2},
\end{equation}
compare with the approximation of $\partial_x^2$ in Example \ref{ex:waveq}. Then $\DDe^2$ is equivalent to two applications of the central difference operator from Example \ref{ex:q} at half the step size, namely
$$
\DDe v(t)=\frac{v(t+\Delta t/2)-v(t-\Delta t/2)}{\Delta t}.
$$
It follows that $q(\omega)=\sin(\pi\omega\Delta t)/\pi\Delta t$ for $\omega\in\Omega=[-\frac{1}{2\Delta t},\frac{1}{2\Delta t}]$ and $\q(\eta)=\sin(\pi\eta)/\pi$. This finite difference operator appears in connection with certain leap\-frog schemes.
We mention that $q$ can also be found by applying a Fourier transform to both sides of \eqref{eq:2ndorder} followed by easy calculations, compare with Remark \ref{rmk:vonNeumann} below.
\end{example}

\begin{remark}\label{rmk:vonNeumann}
As mentioned in the introduction, there is a connection between the phase shift function and the so-called amplification factor appearing in von Neumann stability analysis; in fact, their definitions use the same idea although it is applied to the time domain for the phase shift function whereas it is applied to the spatial domain for the amplification factor. To see this, consider for example the one-dimensional heat equation $\partial_t u(t,x)=\partial_x^2u(t,x)$. Suppose we discretize the equation as
\begin{equation}\label{eq:discreteheat}
\frac{u(t+\Delta t,x)-u(t,x)}{\Delta t}=\frac{u(t,x+\Delta x)-2u(t,x)+u(t,x+\Delta x)}{\Delta x^2}
\end{equation}
where the time derivative is approximated using a forward Euler scheme and the second order spatial derivative is approximated using the finite difference scheme in Example \ref{ex:2ndorder} but now in $x$ instead of $t$. 
By taking a partial Fourier transform in $x$ of both sides we may write this as 
$$
\frac{\mathcal F_x(u(t+\Delta t,\cdot))(\xi)-\mathcal F_x(u(t,\cdot))(\xi)}{\Delta t}=\frac{e^{2\pi i\xi\Delta x}-2+e^{-2\pi i\xi\Delta x}}{\Delta x^2}\mathcal F_x(u(t,\cdot))(\xi).
$$
Rearranging terms and using Euler's formula gives
$$
\mathcal F_x(u(t+\Delta t,\cdot))(\xi)=\bigg[1+2\frac{\Delta t}{\Delta x^2}(\cos(2\pi \xi\Delta x)-1)\bigg]\mathcal F_x(u(t,\cdot))(\xi).
$$
The function in brackets is called the {\it amplification factor} at wave number $\xi$ and it depends on the choice of spatial discretization scheme. By the double angle formula we see that for this particular choice it satisfies
$$
1+2\frac{\Delta t}{\Delta x^2}(\cos(2\pi \xi\Delta x)-1)=1+\Delta t(2\pi i q(\xi))^2,
$$
where $q(\xi)=\sin(\pi\xi\Delta x)/\pi\Delta x$ is the phase shift function from Example \ref{ex:2ndorder}, now defined with respect to $\Delta x$ and evaluated at wave number $\xi$ instead of $\Delta t$ and frequency $\omega$.
(In von Neumann analysis one usually considers the case of a plane wave $u(t,x)=e^{2\pi ix\xi}$ at fixed time $t$ and assumes that $u(t+\Delta t,x)=g(\xi)u(t,x)$ for some amplification factor $g$ which is found by inserting these expressions in \eqref{eq:discreteheat}, see e.g., LeVeque \cite[Ch.~9]{leveque2007finite}; the result is the same.)
\end{remark}

\subsection{Time dispersion transforms}\label{ss:transforms}
Let $\mathbf 1_\Omega$ denote the characteristic function of a set $\Omega$, so that $\mathbf 1_ \Omega(\omega)=1$ if $\omega\in \Omega$ and $\mathbf 1_ \Omega(\omega)=0$ if $\omega\notin \Omega$.
Based on the previous discussion we will henceforth assume that the function $q$ introduced above is restricted to the largest subset $\Omega=\Omega(\Delta t)$ of its domain of definition containing the origin where the mapping $q:\Omega\to q(\Omega)$ is invertible. The inverse function $q^{-1}$ is assumed to be defined on $q(\Omega)$. We also assume that $\Omega$ and $q(\Omega)$ both exhaust $\R$ in the limit as $\Delta t\to0$.

\begin{dfn}\label{def:transforms}
Let $f_i(t,x)$ be a function integrable in $t$. Given a finite difference operator $\DDe$, let $q$ be the corresponding phase shift function in \eqref{def:q}. Define the {\it forward time dispersion transform} (FTDT) of $f_i(t,x)$ as
\begin{equation}\label{FTDT}
\FT (f_i)(t,x)= \int_\Omega e^{2\pi i  t\omega}\widehat{f}_i(q(\omega),x)    \, d\omega.
\end{equation}
Define the {\it inverse time dispersion transform} (ITDT) of $f_i(t,x)$ by
\begin{equation}\label{ITDT}
\IFT (f_i)(t,x)=\int_{q(\Omega)} e^{2\pi i  t\omega}\widehat{f}_i(q^{-1}(\omega),x)    \, d\omega.
\end{equation}
\end{dfn}

The definition extends in the natural way to distributions with well-defined Fourier transforms which are integrable on $q(\Omega)$ and $\Omega$. For example, since the Dirac measure $\delta(t)$ has Fourier transform $\widehat\delta(\omega)\equiv1$ the FTDT of $\delta(t)$ is 
$$
\FT(\delta)(t)= \int_\Omega e^{2\pi i  t\omega}    \, d\omega=\mathcal F^{-1}(\mathbf 1_\Omega)(t).
$$

\begin{example}\label{ex:delta}
Let $q(\omega)=\sin(2\pi\omega\Delta t)/2\pi\Delta t$ for $\omega\in[-\frac{1}{4\Delta t},\frac{1}{4\Delta t}]$, so that $q$ is the phase shift function corresponding to the finite difference operator in Example \ref{ex:q}. 
Then 
$$
\FT(\delta)(t)=\frac{1}{2\Delta t}\sinc(\pi t/2\Delta t),
$$
where $\sinc(t)=\sin(t)/t$ is the sinc function. 
\end{example}

For future purposes we record the fact that
\begin{equation}\label{ITDTalt}
\IFT(f_i)(t,x)=\int_{\Omega}e^{2\pi i tq(\omega)}\widehat{f}_i(\omega,x)q'(\omega)\, d\omega,
\end{equation}
which follows by a straightforward change of variable. Similarly, 
we also have
\begin{equation}\label{FTDTalt}
\FT(f_i)(t,x)=\int_{q(\Omega)}e^{2\pi i tq^{-1}(\omega)}\widehat{f}_i(\omega,x)\frac{1}{q'(q^{-1}(\omega))}\, d\omega.
\end{equation}
Finally, note that  
$$
\FT(f_i)(t,x)=\mathcal F^{-1}\Big[\mathbf 1_\Omega(\cdot)\widehat f_i(q(\cdot),x)\Big](t),
$$
which together with a straightforward calculation shows that
\begin{equation}\label{transformscomposed}
\IFT (\FT(f_i))(t,x)=\int_{q(\Omega)}e^{2\pi i  t\omega} \widehat{f}_i(\omega,x)    \, d\omega.
\end{equation}
In other words, $\IFT (\FT(f_i))$ does not equal $f_i$, but the bandlimited version of $f_i$ with frequency support contained in the range of $q$. Thus, if $f_i$ is already bandlimited then $\IFT(\FT(f_i))=f_i$ for sufficiently small $\Delta t$. As we will demonstrate, the effects of the approximation $\IFT (\FT(f_i))\approx f_i$ are negligible also for non-bandlimited functions with sufficient decay at infinity as long as $\Delta t$ is chosen sufficiently small (so that $q(\Omega)$ is large enough). This latter situation is analyzed in depth in what follows, in particular in Section \ref{section:numsim}.

\begin{example}
Consider again the case when $q(\omega)=\sin(2\pi\omega\Delta t)/2\pi\Delta t$ for $\omega\in[-\frac{1}{4\Delta t},\frac{1}{4\Delta t}]$.
Let $f$ be a bandlimited function so that $f(t)=\mathcal F^{-1}(\mathbf 1_{[-B,B]}\widehat f)(t)$ for some minimal number $B$ (the bandwidth). By the Nyquist-Shannon sampling theorem, the sampling rate necessary to accurately represent $f$ is $f_s>2B$. However, in order to utilize the entire frequency content of $f$ when computing the forward dispersion transform $\FT(f)$, the sampling rate has to be doubled since $[-B,B]\subset \Omega$ if and only if 
$B<\tfrac{1}{4 \Delta t}$, i.e., $f_s=1/\Delta t>4B$.
Furthermore, the sampling rate has to be effectively tripled in order for $\IFT(\FT(f))$ to equal $f$, since $[-B,B]\subset q(\Omega)$ if and only if 
$$
B<q(1/4 \Delta t)=\frac{1}{2\pi \Delta t},
$$
i.e., $f_s=1/\Delta t>2B\pi$.
These drawbacks can sometimes be removed, respectively improved, by using a staggered grid provided that the original equation \eqref{evo} permits such leapfrog discretization schemes (compare with Example \ref{ex:2ndorder}). See Section \ref{section:viscoelastic} and Appendix \ref{3dvisco} for an example of such an implementation.
\end{example}

We shall now examine the applications of Definition \ref{def:transforms} for evolution equation modeling. The following theorem establishes a correspondence between the system of differential equations \eqref{evo} and its counterpart obtained by approximating time derivatives with finite differences. In particular, it shows how to use the FTDT to modify the source function in \eqref{evo} when changing to a system of finite difference equations, and how the ITDT turns a function satisfying said finite difference system into an exact solution of the original evolution equation.

\begin{theorem}\label{thm:evo}
Let $u=(u_1,\ldots,u_K)$ be a solution to the evolution equation \eqref{evo}. Set $g_i=\FT(f_i)$ and $v_i=\FT(u_i)$. Then $v=(v_1,\ldots,v_K)$ satisfies the 
finite difference system
\begin{equation}\label{FDevo}
\PP_i v(t,x) = g_i(t,x),\quad 1\le i\le K,
\end{equation}
for each value of $t$, where $\PP_i$ is the finite difference operator given by \eqref{eq:defofFDsystem}, obtained from $\mathscr P_i$ by approximating time derivatives with finite differences. Conversely, suppose that $v=(v_1,\ldots,v_K)$ is a function satisfying \eqref{FDevo} for all $t$, where each $g_i$ and $v_i$ is integrable in $t\in\R$. Set $f_i=\IFT(g_i)$ and $u_i=\IFT(v_i)$. Then $u=(u_1,\ldots,u_K)$ satisfies \eqref{evo} for all $t$.
\end{theorem}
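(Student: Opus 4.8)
The plan is to conjugate both the differential system \eqref{evo} and the finite difference system \eqref{FDevo} by the partial Fourier transform in $t$, under which each collapses to a purely algebraic identity, pointwise in $\omega$ and $x$. On the Fourier side the transforms $\FT$ and $\IFT$ are frequency-warping multipliers: from the identity $\FT(f_i)(t,x)=\mathcal F^{-1}[\mathbf 1_\Omega(\cdot)\widehat f_i(q(\cdot),x)](t)$ recorded in the excerpt I would read off
$$
\mathcal F(\FT(f_i))(\omega,x)=\mathbf 1_\Omega(\omega)\,\widehat f_i(q(\omega),x),
$$
and the analogous manipulation of \eqref{ITDT} gives $\mathcal F(\IFT(f_i))(\omega,x)=\mathbf 1_{q(\Omega)}(\omega)\,\widehat f_i(q^{-1}(\omega),x)$. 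The whole argument then rests on two symbol identities together with $t$-independence of the spatial operators: $\mathcal F(\partial_t^j u_i)(\omega)=(2\pi i\omega)^j\widehat u_i(\omega)$, and by \eqref{FourierDj}--\eqref{def:q} the identity $\mathcal F(\DDj v_i)(\omega)=(2\pi i q(\omega))^j\widehat v_i(\omega)$, while each $L_{ijk}$ commutes with the $t$-Fourier transform.

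For the forward direction I would take the partial Fourier transform of $\PP_i v$ with $v_i=\FT(u_i)$. Using the symbol identity for $\DDj$ and commuting the $L_{ijk}$ through $\mathcal F$, the transform of $\PP_iv$ equals $\mathbf 1_\Omega(\omega)$ times the bracket
$$
(2\pi i q(\omega))^{n_i}\widehat u_i(q(\omega),x)+\sum_{j=0}^{n_i-1}\sum_{k=1}^K(2\pi i q(\omega))^j L_{ijk}\widehat u_k(q(\omega),x).
$$
But the $t$-Fourier transform of \eqref{evo} shows that this bracket, evaluated at a generic frequency, equals $\widehat f_i$; evaluating that identity at the frequency $q(\omega)$ identifies the bracket with $\widehat f_i(q(\omega),x)$. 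Hence $\mathcal F(\PP_iv)(\omega,x)=\mathbf 1_\Omega(\omega)\widehat f_i(q(\omega),x)=\mathcal F(g_i)(\omega,x)$, and inverting gives \eqref{FDevo}. The converse is the mirror image: with $u_i=\IFT(v_i)$ one finds $\mathcal F(\mathscr P_iu)(\omega,x)$ equals $\mathbf 1_{q(\Omega)}(\omega)$ times the corresponding bracket, now carrying the symbol $2\pi i\omega$ and acting on $\widehat v_i(q^{-1}(\omega),x)$; taking the transform of \eqref{FDevo} and substituting $\omega\mapsto q^{-1}(\omega)$, legitimate precisely for $\omega\in q(\Omega)$ where $q(q^{-1}(\omega))=\omega$, matches this with $\mathbf 1_{q(\Omega)}(\omega)\widehat g_i(q^{-1}(\omega),x)=\mathcal F(f_i)(\omega,x)$, and inversion yields \eqref{evo}.

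Since the computation is algebraically clean, the points requiring care are analytic rather than combinatorial. First, one must ensure that all Fourier transforms and their inverses are well defined and that inversion is legitimate; in the forward direction this is supplied by the standing regularity assumption \eqref{regularityassumption}, and in the converse by the hypothesis that each $g_i$ and $v_i$ is integrable in $t$. Second, and this is the only genuinely delicate step, one must check that the band-limiting factors $\mathbf 1_\Omega$ and $\mathbf 1_{q(\Omega)}$ introduced by the transforms align exactly with the domains on which the frequency substitutions $q$ and $q^{-1}$ are invertible, so that the equalities above hold as honest integrable functions and not merely formally: outside $\Omega$ the forward identity reduces to $0=0$, and outside $q(\Omega)$ the converse identity likewise reduces to $0=0$, which is precisely what renders the substitution $\omega\mapsto q^{-1}(\omega)$ harmless even though $q$ is invertible only on $\Omega$. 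No compatibility with the initial condition \eqref{init} need be verified, since the theorem asserts only that the respective equations hold for every $t$.
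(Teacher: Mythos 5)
Your proposal is correct and follows essentially the same route as the paper: both directions reduce to the symbol identities for $\partial_t^j$ and $\DDj$ on the Fourier side, the substitution $\widehat v_i(\omega,x)=\mathbf 1_\Omega(\omega)\widehat u_i(q(\omega),x)$, and the Fourier-transformed equations \eqref{Fourierevo} and \eqref{FDevoFourier}. The only cosmetic difference is that in the converse the paper works with the change-of-variables representation \eqref{ITDTalt} and differentiates under the integral sign in the time domain, while you perform the equivalent substitution $\omega\mapsto q^{-1}(\omega)$ directly on the frequency side.
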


Note that in neither of the two statements of the theorem is the function $v$ obtained by {\it solving} the finite difference system \eqref{FDevo}, which comes with considerations of stability when choosing step size $\Delta t$. Obtaining $v$ by numerically solving \eqref{FDevo} is of course the ultimate goal, but this first requires a discussion about discrete versions of the dispersion transforms and is postponed until subsection \ref{ss:discreteITDT}. There we also address the issue of interpolating a discrete solution to make it possible to verify that it satisfies the continuous equation \eqref{evo}, see Theorem \ref{thm:inverse}.

\begin{proof}
First note that applying the Fourier transform to \eqref{evo} and evaluating at $q(\omega)$ gives
\begin{equation}\label{Fourierevo}
(2\pi iq(\omega))^{n_i} \widehat{u}_i(q(\omega),x)+\sum_{j=0}^{n_i-1}\sum_{k=1}^K(2\pi iq(\omega))^j L_{ijk}\widehat{u}_k(q(\omega),x)=\widehat{f}_i(q(\omega),x).
\end{equation}Next, using the definition of $\PP_i$ together with \eqref{FourierDj}--\eqref{def:q} we get
\begin{align*}
\PP_i v(t,x) &= \infint \bigg[ (2\pi i q(\omega))^{n_i}\widehat{v}_i(\omega,x) +\sum_{j=0}^{n_i-1}\sum_{k=1}^K(2\pi i q(\omega))^j L_{ijk}\widehat{v}_k(\omega,x)  \bigg]  e^{2\pi i t\omega} \, d \omega
\end{align*}
by the Fourier inversion formula.
Now substitute $\widehat{v}_i(\omega,x)=\mathbf 1_{\Omega} (\omega) \widehat{u}_i(q(\omega),x)$ and use \eqref{Fourierevo} to obtain $\PP_i v_i(t,x) 
=\int_\Omega  \widehat{f}_i(q(\omega),x)  e^{2\pi i t\omega} \, d \omega$. By construction, the right-hand side equals $g_i(t,x)$, which proves the first part of the theorem.

To prove the converse statement, suppose that $v$ is some function satisfying \eqref{FDevo} for all $t\in\R$. Since the $v_i$ are integrable we may apply the Fourier transform to both sides of \eqref{FDevo}. Doing so we find by inspecting \eqref{eq:defofFDsystem} and using \eqref{FourierDj}--\eqref{def:q} that
\begin{equation}\label{FDevoFourier}
(2\pi iq(\omega))^{n_i}\widehat v_i(\omega,x)+\sum_{j=0}^{n_i-1}\sum_{k=0}^K(2\pi iq(\omega))^jL_{ijk}\widehat v_k(\omega,x)=\widehat g_i(\omega,x).
\end{equation}
Setting $u_i=\IFT(v_i)$ we see by \eqref{ITDTalt} that $u_i(t,x)=\int_\Omega e^{2\pi i tq(\omega)}\widehat v_i(\omega,x)q'(\omega)\,d\omega$. Applying $\mathscr P_i$ to $u=(u_1,\ldots,u_K)$ and differentiating under the integral sign shows that $\mathscr P_iu(t,x)$ is equal to
$$
\int_\Omega e^{2\pi itq(\omega)}\bigg[(2\pi iq(\omega))^{n_i}\widehat v_i(\omega,x)+\sum_{j=0}^{n_i-1}\sum_{k=0}^K(2\pi iq(\omega))^jL_{ijk}\widehat v_k(\omega,x)\bigg]q'(\omega)\,d\omega.
$$
By \eqref{FDevoFourier} we conclude that $\mathscr P_iu(t,x)=\int_\Omega e^{2\pi itq(\omega)}\widehat g_i(\omega,x)q'(\omega)\,d\omega=\IFT(g_i)(t,x)$, where the last identity follows by \eqref{ITDTalt}. This completes the proof.
\end{proof}

We conclude this subsection with a few general remarks.

\begin{rmk1}\label{rmk:FIO} Close inspection of \eqref{ITDTalt} and \eqref{FTDTalt} using the normalized phase shift function $\q(\eta)$ defined in \eqref{eq:qnorm} shows that the dispersion transforms $\IFT$ and $\FT$ can be formally interpreted as Fourier integral operators depending on a small semiclassical parameter $h=\Delta t$ (see Appendix \ref{ss:FIO}). As such, they are associated with a canonical map $\chi$ and its inverse $\chi^{-1}$ acting on phase space via
\begin{equation}\label{eq:chi}
\chi:(t\q'(\eta),\eta)\mapsto(t,\q(\eta)).
\end{equation}
The physical meaning of this is well understood in terms of dynamics of wave packets \cite{faure2011semiclassical}. We provide a detailed presentation in 
\S\ref{ss:dynamics}, briefly summarized as follows: let $(t_0,\eta_0)$ be a point in phase space and consider a {\it Gaussian wave packet} defined by
$$
\varphi_{(t_0,\eta_0)}(t)=(2/\Delta t)^{\frac{1}{4}}
e^{2\pi i (t-t_0)\eta_0/\Delta t}e^{-\pi (t-t_0)^2/\Delta t}.
$$
When $t\ne t_0$ we have $\varphi_{(t_0,\eta_0)}(t)=O(\Delta t^\infty)$ as $\Delta t\to0$, where $O(\Delta t^\infty)$ means $O(\Delta t^N)$ for all $N>0$. Similarly, the semiclassical (i.e., scaled) Fourier transform 
\begin{align}\label{eq:dt-scaling}
\mathcal F_{\Delta t}(\varphi_{(t_0,\eta_0)} )(\eta)&=(\Delta t)^{-\frac{1}{2}}\mathcal F(\varphi_{(t_0,\eta_0)} )(\eta/\Delta t)\\&=
(2/\Delta t)^{\frac{1}{4}}e^{-2\pi i t_0\omega/\Delta t}e^{-\pi (\eta-\eta_0)^2/\Delta t}
\notag
\end{align}
is $O(\Delta t^\infty)$ as $\Delta t\to0$ if $\eta\ne\eta_0$. Such a function is said to be {\it microlocally small} outside $\{(t_0,\eta_0)\}$. By Proposition \ref{prop:wavefrontset}, the  ITDT of $\varphi_{(t_0,\eta_0)}$ is microlocally small outside 
$$
\{\chi(t_0,\eta_0)\}=\{(t_0/\q'(\eta_0),\q(\eta_0))\}
$$
and the FTDT of $\varphi_{(t_0,\eta_0)}$ is microlocally small outside
$$
\{\chi^{-1}(t_0,\eta_0)\}=\{(t_0\q'(\q^{-1}(\eta_0)),\q^{-1}(\eta_0))\}.
$$
Thus in this sense, as $\Delta t\to0$, the ITDT of $\varphi_{(t_0,\eta_0)}$ behaves like the wave packet $\varphi_{\chi(t_0,\eta_0)}$ and the FTDT of $\varphi_{(t_0,\eta_0)}$ behaves like the wave packet $\varphi_{\chi^{-1}(t_0,\eta_0)}$.
This phenomenon is illustrated in Figure \ref{fig:wavepacket}.

We also mention that by using arguments similar to those in the proof of Theorem \ref{thm:evo}, it is straightforward to check that $\FT\mathscr P_i =\PP_i\FT$. Viewing the dispersion transforms as Fourier integral operators, the proof of the first statement in Theorem \ref{thm:evo} would then proceed by simply noting that, by assumption, $v_i=\FT u_i$ and $g_i=\FT f_i$, so
$$
\PP_i v_i=\PP_i \FT u_i=\FT \mathscr P_i u_i=\FT f_i=g_i.
$$
The converse statement of Theorem \ref{thm:evo} can be proved in a similar way.
In the sequel we shall continue to prefer elementary proofs using explicit formulas instead of relying on the framework of microlocal analysis. However, this interpretation does succinctly highlight the obstruction caused by allowing time-dependent coefficients in \eqref{evo}, see the discussion in Appendix \ref{ss:FIO}.
\end{rmk1}

\begin{figure}[!t]
\centering
\includegraphics{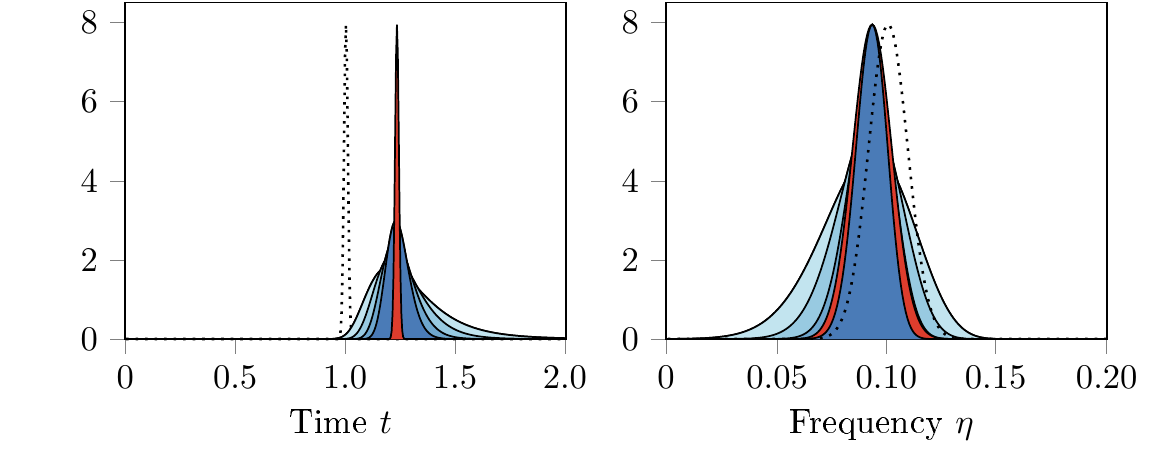}
\\
\includegraphics{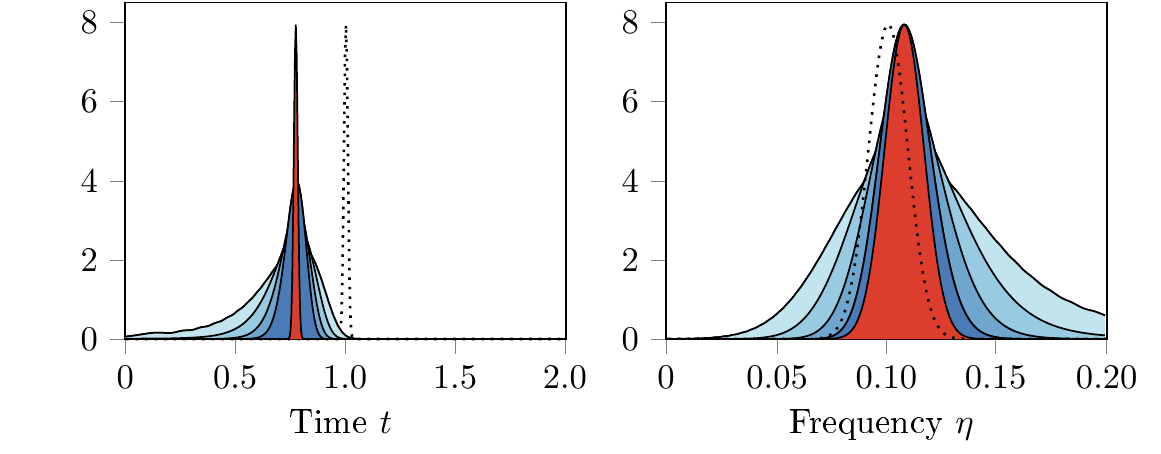}
\caption{The inverse (top) and forward (bottom) dispersion transforms of a wave packet $\varphi_{(t_0,\eta_0)}$ in the time domain (left) together with their scaled Fourier transforms (right, cf.~\eqref{eq:dt-scaling}), represented in blue by their corresponding modulus. Here $t_0=1$ and $\eta_0=0.1$, the normalized phase shift function is $\q(\eta)=\sin(2\pi\eta)/2\pi$, and the ITDT and FTDT of $\varphi_{(t_0,\eta_0)}$ are computed for $\Delta t$ ranging from $4\,\mathrm{ms}$ (light) to $0.5\, \mathrm{ms}$ (dark). In addition, $\varphi_{(t_0,\eta_0)}$ (dotted) as well as the wave packets microlocalized at $\chi(t_0,\eta_0)$ (red, top) and at $\chi^{-1}(t_0,\eta_0)$ (red, bottom) together with their scaled Fourier transforms are shown for $\Delta t=0.5\, \mathrm{ms}$ as reference. \label{fig:wavepacket}}
\end{figure}

\begin{rmk2} These are not mentioned in Theorem \ref{thm:evo}. Since the finite difference schemes we have considered so far are multistep methods, initial data for $t\le 0$ is required to get started. The natural choice is to impose the same initial conditions for \eqref{FDevo} as for \eqref{evo}; this is also motivated by the fact that for $t\le 0$, dispersion should not yet have started to affect the numerical solution. However, suppose that $u_i$ is the solution to \eqref{evo} with initial condition \eqref{init}, and let $v_i$ be a function satisfying \eqref{FDevo} with the same initial condition. Then $v(t,x)\equiv 0$ for $t\le0$, but due to the non-local nature of the dispersion transforms, this is not true for $\FT(u_i)$ which introduces an approximation error between $\FT(u_i)$ and $v_i$. On the other hand, according to Lemma \ref{lem:init} the error is small and controlled by the time-step size $\Delta t$ (see \S\ref{ss:init} for precise statements). 
Assuming that solutions of \eqref{FDevo} depend continuously on the initial data we thus conclude by Theorem \ref{thm:evo} that $\FT(u_i)$ will continue to stay close to $v_i$ for all time. 
The introduction of this error is also mitigated by the fact that when both dispersion transforms are used together in a modeling scenario as pre- and post-filters, then a reverse error is introduced during the post-filtering process. This is given credence by the numerical results in Sections \ref{section:numsim} and \ref{section:viscoelastic}. 
In view hereof we will from now on always assume that unless stated to the contrary, initial conditions are given by \eqref{init}.
\end{rmk2}

\begin{rmk3}\label{ss:nonreal} In this case, the phase shift function
$q$ will not be real-valued in general. Still, under certain conditions one can define a version of the FTDT and ITDT, although this requires sufficiently fast (exponential) decay of the solutions $t\mapsto u_i(t)$ for the definitions above to make sense. This happens, e.g., if $u_i(t,x)\equiv 0$ for $t<0$, $\lvert u_i(t,x)\rvert\le Ce^{-2\pi\alpha t}$ for some constants $C$ and $\alpha$, while $\im q(\omega)< \alpha$ for $\omega\in \Omega$, for then the integral 
$$
\infint e^{-2\pi i t q(\omega)}u_i(t,x)\, dt
$$
defining $\widehat u_i(q(\omega),x)$
is absolutely convergent for all $\omega\in \Omega$. In particular, $v_i(t,x)=\FT(u_i(\cdot,x))(t)$ is well defined. If the $f_i$ satisfy similar decay conditions then the first statement in
Theorem \ref{thm:evo} immediately generalizes to cover this situation. The converse statement can also be generalized using similar adjustments.
We leave it to the interested reader to fill in the details.
\end{rmk3}

\begin{rmk4}\label{rmk:nonmatching}
Due to the coupled nature of \eqref{evo} it was essential in the proof of Theorem \ref{thm:evo} that the same finite difference approximation of the time derivative was used for all involved operators $\mathscr P_i$. As soon as this is not the case, the result ceases to hold without appropriate modifications. For comparison, one such example of nonmatching finite difference schemes is provided in \S\ref{ss:nonmatching}.
\end{rmk4}

\subsection{Discrete transforms}\label{ss:discreteITDT}

Theorem \ref{thm:evo} shows how to use the FTDT to compensate for numerical dispersion when passing from a continuous equation to a finite difference equation, and that the ITDT should be used afterwards to turn a solution of the finite difference equation into a solution of the desired continuous equation. However, Theorem \ref{thm:evo} failed to address how to apply the ITDT to a solution obtained by actually solving a finite difference equation (modified using the FTDT). For this, we must introduce suitable discrete versions of the transforms. In the process, we will obtain a methodology for correctly simulating the solution to an evolution equation of type \eqref{evo}. 

We will demonstrate how to simulate the solution for $0\le t\le T$, where $T>0$ is the desired lifespan. Discretizing the equations in time and correcting for time dispersion leads us to solve the difference system \eqref{FDevo}. Suppose therefore that $v_i$ is a computed solution to \eqref{FDevo}, with known values $v_i(t_n,x)$ at times $t_n=n\Delta t$, $0\le n<T/\Delta t$. (We describe below how to compute the right-hand side of \eqref{FDevo} using discrete sums.) 
We assume that $T/\Delta t=N$ for some integer $N$, so that
$$
\Delta t=T/N,
$$
and denote by $S$ the set of sampling points
$$
S=\{n\Delta t:0\le n<N\}.
$$
We will assume that $\Delta t$ is chosen small enough depending on the spatial operators $L_{ijk}$ so that the resulting difference system \eqref{FDevo} is numerically stable.

We begin with a general discussion and let $f(t)$ be a function of $t\in[0,T]$ with known values at the points in $S$.
Let $\omega_m\in\Omega$ and introduce
$$
a_m(f)=\Delta t\sum_{n=0}^{N-1}f(n\Delta t)e^{-2\pi i n\Delta t\omega_m}. 
$$
This is a Riemann sum of the integral $\int_0^Tf(t)e^{-2\pi i t\omega_m}dt$ and as such an approximation of $\widehat f(\omega_m)$ provided $f$ vanishes outside $[0,T]$.
Inspecting the definition \eqref{ITDTalt} of $\IFT(f)$ we then choose a partition of $\Omega$ and define a function of the continuous variable $t\in[0,T]$ via 
\begin{equation}\label{IFTD}
\ITDT(f)(t)=\Delta \omega\sum_{\omega_m\in\Omega}a_{m}(f)e^{2\pi i q(\omega_m) t}q'(\omega_m).
\end{equation}
Here, $\Delta \omega$ is the distance between two consecutive points $\omega_{m+1}$ and $\omega_m$ in the partition. The formula is thus a Riemann sum of the integral $\int_\Omega \widehat f(\omega)e^{2\pi i q(\omega)t}q'(\omega)\, d\omega$. In view of \eqref{ITDTalt}, this is clearly a discrete representation of the ITDT defined in \S\ref{ss:transforms}. Its usage allows for modeling the desired solution of \eqref{evo} with correct evolution in time.

\begin{theorem}\label{thm:inverse}
Let $\Delta t=T/N$ and $v=(v_1,\ldots,v_K)$ be a solution of \eqref{FDevo} computed at times $t_n=n\Delta t$ for $0\le n<N$. Define $u_i(t,x)=\ITDT(v_i(\cdot,x))(t)$ and $f_i(t,x)=\ITDT(g_i(\cdot,x))(t)$. Then $u=(u_1,\ldots,u_K)$ solves \eqref{evo} for $0< t< T$.
\end{theorem}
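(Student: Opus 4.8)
The plan is to mirror the proof of Theorem \ref{thm:evo}, but with the continuous Fourier transform replaced by the discrete coefficients $a_m$ and the symbol identity \eqref{FourierDj} replaced by its discrete counterpart. The key structural observation is that, unlike in the continuous case, $\ITDT(v_i)$ and $\ITDT(g_i)$ are \emph{finite} sums of exponentials: by \eqref{IFTD},
$$
u_i(t,x)=\Delta\omega\sum_{\omega_m\in\Omega}a_m(v_i)\,q'(\omega_m)\,e^{2\pi iq(\omega_m)t},\qquad f_i(t,x)=\Delta\omega\sum_{\omega_m\in\Omega}a_m(g_i)\,q'(\omega_m)\,e^{2\pi iq(\omega_m)t},
$$
where the coefficients $a_m(v_i)=a_m(v_i(\cdot,x))$ depend on $x$ but not on $t$. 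In particular $u_i$ is entire in $t$, so I may apply $\mathscr P_i$ termwise: each $\partial_t^j$ acting on $e^{2\pi iq(\omega_m)t}$ produces the factor $(2\pi iq(\omega_m))^j$, while the spatial operators $L_{ijk}$ act only on the coefficients $a_m(v_k)$. This is the discrete analogue of differentiating under the integral sign in Theorem \ref{thm:evo}, and it reduces the claim $\mathscr P_iu=f_i$ to the per-frequency identity
$$
(2\pi iq(\omega_m))^{n_i}a_m(v_i)+\sum_{j=0}^{n_i-1}\sum_{k=1}^K(2\pi iq(\omega_m))^jL_{ijk}a_m(v_k)=a_m(g_i),\qquad\omega_m\in\Omega.
$$

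Next I would bring in the finite difference equation. Since $v$ satisfies \eqref{FDevo} at the grid points, $g_i(n\Delta t,x)=\PP_iv(n\Delta t,x)$ for $0\le n<N$, and hence $a_m(g_i)=a_m(\PP_iv)$. Because each $L_{ijk}$ acts on $x$ while $a_m$ sums over the time index $n$, the two commute, so $a_m(\DDj L_{ijk}v_k)=L_{ijk}\,a_m(\DDj v_k)$ and the right-hand side splits as $a_m(\DDe^{n_i}v_i)+\sum_{j,k}L_{ijk}\,a_m(\DDj v_k)$. Comparing this with the per-frequency identity above, the whole theorem comes down to the \emph{discrete symbol identity}
$$
a_m(\DDj v_k)=(2\pi iq(\omega_m))^j\,a_m(v_k),
$$
which is the exact discrete analogue of \eqref{FourierDj}; matching the $j$-th order terms one by one then yields $\mathscr P_iu=f_i$. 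Conceptually this is the statement that $\ITDT$ intertwines $\PP_i$ and $\mathscr P_i$ on the grid, the discrete shadow of the relation $\FT\mathscr P_i=\PP_i\FT$ noted after Theorem \ref{thm:evo}.

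I expect the discrete symbol identity to be the main obstacle, since it is exactly where the clean continuous computation must be repaired. Substituting the definition of $\DDe$ and reindexing $a_m(\DDe v_k)$ produces the desired main term $(2\pi iq(\omega_m))a_m(v_k)=\big(\sum_nc_{1,n}e^{2\pi in\Delta t\omega_m}\big)a_m(v_k)$, recognized via \eqref{def:q}, together with boundary terms from the two ends of the index range $\{0,\dots,N-1\}$. The contributions from negative time indices vanish by the initial condition $v_k\equiv0$ for $t\le0$, but the remaining wrap-around terms cancel only if the summation kernel $e^{-2\pi in\Delta t\omega_m}$ is $N$-periodic in $n$. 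I would therefore place the partition points on the grid $\omega_m=m/(N\Delta t)$, so that $e^{-2\pi iN\Delta t\omega_m}=1$, and read $a_m$ as a discrete Fourier transform with $v_k$ and $g_i$ extended $N$-periodically in the time index; the shifts in $\DDe$ are then circular and the terms at the two ends cancel exactly. The first-order identity iterates to the $j$-th order version using $\DDj=\DDe\circ\cdots\circ\DDe$, and the restriction $0<t<T$ singles out the fundamental window of this periodic discrete model, on which it represents the computed solution.
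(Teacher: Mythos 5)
Your overall route coincides with the paper's: you expand $\ITDT(v_i)$ as a finite sum of exponentials via \eqref{IFTD}, apply $\mathscr P_i$ termwise to reduce the claim to the per-frequency identity (the paper's \eqref{Fcoefffs}), feed in $a_m(g_i)=a_m(\PP_i v)$, and identify the discrete symbol identity $a_m(\DDe f)=2\pi i q(\omega_m)\,a_m(f)$ (the paper's \eqref{findingq}) as the crux. Up to that point your argument matches the paper's proof step for step.

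The one place you diverge — the treatment of the boundary terms produced by reindexing — is where your fix does not quite close the gap. Extending $v_k$ $N$-periodically and making the shifts circular does make the symbol identity exact, but for the \emph{circular} operator, whereas the hypothesis of the theorem is that $v$ satisfies \eqref{FDevo} with the genuine shift operator: the circular $\DDe$ evaluated at $n=0$ sees $v_k((N-1)\Delta t)$ in place of $v_k(-\Delta t)=0$, and similarly at the right end, so it disagrees with the true $\DDe v_k$ at grid points within a stencil width of $0$ and $N-1$ unless $v_k$ vanishes there. Consequently the chain $a_m(g_i)=a_m(\PP_i v)=(2\pi iq(\omega_m))^{n_i}a_m(v_i)+\cdots$ breaks at the second equality; the periodization relocates the boundary problem rather than removing it. The mechanism that actually kills these terms is the one you already invoke at the left end, applied at both ends: $v_k$ vanishes at the grid points near $n=0$ by the initial condition \eqref{init}, and at those near $n=N$ by the standing assumption that the solution has decayed by the end of the window (this is precisely the ``provided $f$ vanishes outside $[0,T]$'' caveat the paper attaches to $a_m$). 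With that in hand the reindexed sums agree term by term, \eqref{findingq} holds for an \emph{arbitrary} partition $\{\omega_m\}$ of $\Omega$, and no special frequency grid is needed — which also matters because your grid $\omega_m=m/(N\Delta t)$ differs from the one the paper actually implements, $\omega_m=m/(2N\Delta t)$, corresponding to zero-padding to length $2N$ rather than $N$-periodization.
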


\begin{proof}
In the proof we let $x$ be fixed and suppress it from the notation. If $f$ is a function sampled on $S$ and $\DDe$ is given by \eqref{1order} then a simple calculation shows that
\begin{equation}\label{findingq}
a_m(\DDe f)=a_m(f)\sum_jc_{1,j}e^{2\pi ij\omega_m\Delta t}.
\end{equation}
The second factor on the right is identified as $2\pi iq(\omega_m)$, with $q$ given by \eqref{def:q}. 
We record the fact that if $v_i$ solves \eqref{FDevo} then 
$a_m(\mathsf P_i v)=a_m(g_i)$,
which in view of \eqref{findingq} means that
\begin{equation}\label{Fcoefffs}
(2\pi iq(\omega_m))^{n_i}a_m(v_i)+\sum_{j=0}^{n_i-1}\sum_{k=1}^KL_{ijk}(2\pi iq(\omega_m))^{j}a_m(v_k)=a_m(g_i).
\end{equation}

Next, inserting the definition of 
$$
u_i(t)=\Delta \omega\sum_{\omega_m\in\Omega}a_m(v_i)e^{2\pi iq(\omega_m)t}q'(\omega_m)
$$
into \eqref{evo} and differentiating we get
\begin{multline*}
\mathscr P_iu(t,x)=\Delta \omega\sum_{\omega_m\in\Omega}\bigg[(2\pi iq(\omega_m))^{n_i}a_m(v_i)+\sum_{j=0}^{n_i-1}\sum_{k=1}^KL_{ijk}(2\pi iq(\omega_m))^{j}a_m(v_k)\bigg]\\ \times e^{2\pi i q(\omega_m)t}q'(\omega_m).
\end{multline*}
In view of \eqref{Fcoefffs} we conclude that
$$
\mathscr P_iu(t,x)=\Delta \omega\sum_{\omega_m\in\Omega}a_m(g_i)e^{2\pi i q(\omega_m)t}q'(\omega_m)=\ITDT(g_i)(t).
$$
By definition, the right-hand side is equal to $f_i(t)$, which completes the proof.
\end{proof}

Having verified that $\ITDT(v_i)(t)$ evolves correctly in time, we now discuss how the transform $\ITDT$ acts on arbitrary vectors in a discrete setting. 
 Given a solution $v_i$ to \eqref{FDevo} with known values $v_i(t_n,x)$ at times $t_n=n\Delta t$, $0\le n<T/\Delta t$, we first construct the function $\ITDT(v_i(\cdot,x))(t)$ as above. To obtain a function sampled on $S$ we simply evaluate $\ITDT(v_i(\cdot,x))(t)$ at the points $t=k\Delta t$, $k=0,\ldots,N-1$. 
This immediately generalizes to an arbitrary vector of length $N$: given any vector $(f_0,\ldots, f_{N-1})$ we define its inverse time dispersion transform by 
\begin{equation}\label{discreteITDT}
(\ITDT (f_n))_k=\Delta \omega\sum_{\omega_m\in\Omega}\Bigg(\Delta t\sum_{n=0}^{N-1}e^{-2\pi i n\omega_m\Delta t}f_n\Bigg)e^{2\pi i q(\omega_m)k\Delta t }q'(\omega_m),
\end{equation}
for $k=0,\ldots,N-1$.

We now describe how to compute the FTDT (of, e.g., the right-hand side of \eqref{evo}) using discrete sums. For any function $f$ sampled on $S$ we define a modified version of the samples $a_m(f)$ by
\begin{equation}\label{modFouriermode}
b_m(f)=\Delta t\sum_{n=0}^{N-1}f(n\Delta t)e^{-2\pi i n q(\omega_m)\Delta t},
\end{equation}
where the frequencies $\omega_m$ are as above. Thus $b_m(f)$ is defined by replacing $\omega_m$ by $q(\omega_m)$ in the definition of $a_m(f)$.
Next, define a function of the continuous variable $t\in[0,T]$ via
$$
\FTDT(f)(t)=\Delta \omega\sum_{\omega_m\in\Omega}b_m(f)e^{2\pi i \omega_m t},
$$
which in view of the previous discussion is a Riemann sum of the integral defining $\FT(f)(t)$. To obtain a function sampled on $S$ we evaluate $\FTDT(f)(t)$ at the points $t=k\Delta t$, $k=0,\ldots,N-1$. Finally, to define the FTDT of a vector we identify $f(n\Delta t)$, $n=0,\ldots, N-1$, with a vector $f=(f_0,\ldots, f_{N-1})$ and define the forward time dispersion transform of $(f_n)$ as 
$$
(\FTDT (f_n))_k=\FTDT(f)(k\Delta t),\quad k=0,\ldots, N-1.
$$
As with the inverse time dispersion transform, this immediately generalizes to an arbitrary vector of length $N$. Given any vector $(f_0,\ldots, f_{N-1})$ we thus define its forward time dispersion transform by
\begin{equation}\label{discreteFTDT}
(\FTDT (f_n))_k=\Delta \omega\sum_{\omega_m\in\Omega}\Bigg(\Delta t\sum_{n=0}^{N-1}e^{-2\pi i nq(\omega_m)\Delta t}f_n\Bigg)e^{2\pi i \omega_mk\Delta t }.
\end{equation}

Combined with Theorem \ref{thm:inverse}, the previous discussion yields the main result of this section.

\begin{theorem}\label{thm:mainthm}
Let $\Delta t=T/N$. Given a source function $f_i^\mathrm{orig}$ of \eqref{evo}, set $g_i=\FTDT(f_i^\mathrm{orig})$ and let $v=(v_1,\ldots,v_K)$ be a solution of \eqref{FDevo} computed at times $t_n=n\Delta t$ for $0\le n<N$. Define $u_i(t,x)=\ITDT(v_i(\cdot,x))(t)$. Then, for $0< t< T$, $u=(u_1,\ldots,u_K)$ solves \eqref{evo} with $f_i^\mathrm{orig}$ replaced by $\ITDT(\FTDT(f_i^\mathrm{orig}(\cdot,x)))(t)$.
\end{theorem}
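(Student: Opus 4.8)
The plan is to obtain Theorem~\ref{thm:mainthm} as an immediate consequence of Theorem~\ref{thm:inverse}, since essentially all of the analytic content has already been established there. Theorem~\ref{thm:inverse} takes as input any $v=(v_1,\ldots,v_K)$ solving the finite difference system \eqref{FDevo} with right-hand sides $g_i$, computed at the times $t_n=n\Delta t$, and concludes that the functions $u_i(t,x)=\ITDT(v_i(\cdot,x))(t)$ satisfy the original evolution equation \eqref{evo} on $0<t<T$ with source terms $f_i(t,x)=\ITDT(g_i(\cdot,x))(t)$. Crucially, its hypotheses place no restriction on how the $g_i$ were produced; they only require that the $g_i$ be the actual right-hand sides of the system being solved. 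So my first step is simply to verify that the setup of Theorem~\ref{thm:mainthm} matches these hypotheses verbatim.

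This verification is where the only bookkeeping lies. In Theorem~\ref{thm:mainthm} we fix the source of the finite difference system at the outset by setting $g_i=\FTDT(f_i^\mathrm{orig})$, and we let $v$ solve \eqref{FDevo} with precisely these $g_i$. Thus the $g_i$ appearing here are exactly the right-hand sides fed into Theorem~\ref{thm:inverse}. Applying that theorem directly, the functions $u_i=\ITDT(v_i)$ solve \eqref{evo} for $0<t<T$ with source term $\ITDT(g_i)$. The remaining step is to unwind this source: substituting $g_i=\FTDT(f_i^\mathrm{orig})$ yields $\ITDT(g_i)(t)=\ITDT(\FTDT(f_i^\mathrm{orig}(\cdot,x)))(t)$, which is exactly the replacement for $f_i^\mathrm{orig}$ asserted in the statement. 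This completes the argument.

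I do not expect any genuine obstacle here: the substance resides entirely in Theorem~\ref{thm:inverse}, and Theorem~\ref{thm:mainthm} merely records the effect of composing the discrete forward and inverse transforms as pre- and post-computational filters. The one conceptual point worth flagging in the write-up is that the effective source is $\ITDT(\FTDT(f_i^\mathrm{orig}))$ rather than $f_i^\mathrm{orig}$ itself; in the continuous setting, identity \eqref{transformscomposed} shows that $\IFT(\FT(f_i))$ is the bandlimited version of $f_i$ with frequency support in the range of $q$, and one expects the discrete composition to behave analogously, so that $\ITDT(\FTDT(f_i^\mathrm{orig}))\approx f_i^\mathrm{orig}$ becomes negligible as $\Delta t\to0$ for sources with sufficiently rapid frequency decay. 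The theorem as stated, however, requires none of this approximation analysis, so I would keep the proof itself to the short deduction above and defer the discussion of the residual error to Section~\ref{section:numsim}.
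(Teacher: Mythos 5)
Your proposal is correct and matches the paper exactly: the paper gives no separate proof, stating only that the result follows by ``combining Theorem \ref{thm:inverse} with the previous discussion,'' which is precisely your deduction of applying Theorem \ref{thm:inverse} with $g_i=\FTDT(f_i^\mathrm{orig})$ and identifying the effective source as $\ITDT(\FTDT(f_i^\mathrm{orig}))$. Your closing remark about the composition not being the identity likewise mirrors the paper's own discussion following the theorem.
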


We stress that, as mentioned in \S\ref{ss:transforms}, the composition of the FTDT and ITDT is not the identity mapping 
since $\IFT(\FT(f))$ is a bandlimited version of $f$ with frequency support contained in $q(\Omega)$. In particular, suppose we want to simulate a solution to \eqref{evo} with source term $f_i^\mathrm{orig}$. To do so, the method prescribed by Theorem \ref{thm:mainthm} is to compute $g_i=\FTDT(f_i^\mathrm{orig})$, i.e., the (discrete) FTDT of the source term, and solve the discretized equation \eqref{FDevo} with source term $g_i$. If $v_i$ is the obtained solution, the theorem implies that $u_i=\ITDT(v_i)$ simulates the evolution of the solution to the original equation \eqref{evo} but with source term
$$
f_i=\ITDT(g_i)=\ITDT(\FTDT(f_i^\mathrm{orig}))
$$
which is an approximation of the bandlimited version of $f_i^\mathrm{orig}$ with frequency support contained in $q(\Omega)$. 
If the frequency content of $f_i^\mathrm{orig}$ is negligible outside a compact set then one can make sure to capture its most relevant features by choosing $\Delta t$ sufficiently small.
This follows since $q(\Omega)\to\R$ as $\Delta t\to0$ and is investigated in detail in Section \ref{section:numsim} below (see Figure \ref{fig2}).

\begin{rmk}
Note that a priori, the vector $(f_n)$ in \eqref{discreteITDT} and \eqref{discreteFTDT} should be a vector representing a function sampled on $S$. If not, interpreting the continuous FTDT and ITDT as Riemann sums lead to different discrete formulas since the range of the index $n$ changes. Note also that although the evolution equation \eqref{evo} is translation invariant, the FTDT and ITDT transforms are not. In particular,  we have
$$
\FT(f)(t)\ne\FT(f(\cdot-t_0))(t+t_0),\quad \IFT(f)(t)\ne\IFT(f(\cdot-t_0))(t+t_0)
$$
in general. However, this is not a problem since we do in fact have
$$
\IFT(\FT(f))(t)=\IFT(\FT(f(\cdot-t_0)))(t+t_0)
$$
as a consequence of \eqref{transformscomposed} (in analogy with the Fourier inversion formula). Thus, when solving a Cauchy problem on, say, $[t_0,T+t_0]$, one can still apply \eqref{discreteITDT} and \eqref{discreteFTDT} to a vector representing a function sampled on $[t_0,T+t_0]$. Heuristically, this amounts to the same as translating the original equation to $[0,T]$, applying the transforms there, and translating back. In view of the discussion preceding this remark one is then simulating a solution to an evolution equation with source term
$$
f_i(t)=\ITDT(\FTDT(f_i^\mathrm{orig}(\cdot-t_0)))(t+t_0)=\ITDT(\FTDT(f_i^\mathrm{orig}))(t).
$$
\end{rmk}

\subsection{Fast implementation}\label{ss:fast}

In practice, the formulas \eqref{discreteITDT} and \eqref{discreteFTDT} can often be simplified once a specific choice of phase shift function $q$ is made. Specifically, 
\begin{itemize}
\item using the normalized variable $\omega\Delta t$ can allow the formulas to be interpreted as discrete Fourier transforms which can be implemented using the fast Fourier transform (FFT), and
\item using symmetry properties of $q$ and $\Omega$ can allow for more efficient algorithms.
\end{itemize}
Both situations are showcased in the following example.

\begin{example}\label{ex:implementation2}
Let $\DDe$ be the central difference operator from Example \ref{ex:q},
$$
\DDe v(t)=\frac{v(t+\Delta t)-v(t-\Delta t)}{2\Delta t}.
$$
Then $q(\omega)=\sin(2\pi\omega\Delta t)/2\pi\Delta t$. Assume as above that $\Delta t=T/N$ where $N$ is the number of sampling points in the time domain, and $T$ is the desired lifespan of the solution. 
Then $\Omega=\{\omega:\lvert\omega\rvert\le 1/4\Delta t\}$. To avoid cumbersome notation we will assume that $N$ is even so that $N/2$ is an integer. Inspecting \eqref{discreteITDT} we see that we can compute the inner sum by means of the discrete Fourier transform by choosing $\omega_m$ appropriately. We pick 
$$
\omega_m=\frac{1}{4\Delta t}\frac{2m}{N}
$$
so that $\omega_m\in\Omega$ when $m=-N/2,\ldots,N/2$. Substitution into \eqref{discreteITDT} gives after cancellations that
\begin{equation}\label{discreteITDTfast}
(\ITDT (f_n))_k=\frac{1}{2N}\sum_{m=-N/2}^{N/2}\Bigg(\sum_{n=0}^{N-1}e^{-2\pi i nm/2N}f_n\Bigg)e^{i k\sin(\pi m/N) }\cos(\pi m/N).
\end{equation}
As stated, the inner sum is the value at $m$ of the discrete Fourier transform of $\tilde f$, where $\tilde f$ is $f=(f_n)$ zeropadded to twice the length (i.e., the $m$:th Fourier mode of the vector $(f_0,\ldots,f_{N-1},0,\ldots,0)$ of length $2N$), and can be computed, e.g., using the FFT. The outer sum is the value at $k$ of a modified discrete inverse Fourier transform (truncated to use only the Fourier modes for $-N/2\le m\le N/2$ instead of the full range $-N\le m\le N-1$). If discrete transforms of numerous samples are to be computed, it is  advantageous to interpret \eqref{discreteITDTfast} as a linear map acting on the vector $(f_n)$ and compute the corresponding matrix. The cost of this operation scales as $O(N^2+N\log N)$.  Details for implementation in MATLAB can be found in Appendix \ref{ss:centralFD}.

In a similar manner we find by substituting the expression for $\omega_m$ into \eqref{discreteFTDT} that
\begin{equation}\label{discreteFTDTfast}
(\FTDT (f_n))_k=\frac{1}{2N}\sum_{m=-N/2}^{N/2}\Bigg(\sum_{n=0}^{N-1}e^{-i n\sin(\pi m/N) }f_n\Bigg)e^{2\pi i mk/2N }.
\end{equation}
Here, the inner sum is a modified discrete Fourier transform while the outer sum is a truncated discrete inverse Fourier transform at $k$. The outer sum can be computed using the inverse FFT. 
We also observe that if $f=(f_n)_{n=0}^{N-1}$ is a vector with real entries, then the inner sum in $(\FTDT (f_n))_k$ equals $b_m(f)/\Delta t$ in the notation above, where $b_{-m}(f)=\overline{b_{m}(f)}$ and bar denotes complex conjugation. This is a consequence of the fact that sine is an odd function. Similarly, $a_{-m}(f)=\overline{a_m(f)}$, and these symmetries can be used for a more efficient implementation. See Appendix \ref{app:matlab} for details.
Note that both \eqref{discreteITDTfast} and \eqref{discreteFTDTfast} only contain  frequency content up to a quarter of the sampling rate, i.e., up to {\it half} the Nyquist (folding) frequency. This situation is avoided when a leapfrog scheme can be employed, see Appendix \ref{ss:leapfrog}.
\end{example}

\begin{rmk} An alternative definition of $\ITDT$ found in the literature \cite{koene2018eliminating} is obtained by using Riemann sums to approximate \eqref{ITDT} instead of \eqref{ITDTalt}. One such example is
$$
(\mathcal{I}_\mathrm{disc}^\mathrm{alt}(f_n))_k=\Delta \xi\sum_{\xi_m\in q(\Omega)}\Bigg(\Delta t\sum_{n=0}^{N-1}e^{-2\pi i nq^{-1}(\xi_m)\Delta t}f_n\Bigg)e^{2\pi i \xi_mk\Delta t },
$$
where the $\xi_m$ are points evenly distributed in $q(\Omega)$ and the first factor is the distance between consecutive points $\xi_{m+1}$ and $\xi_m$. For implementation using the discrete Fourier transform, a natural option is to choose $\xi_m$ so that
$e^{2\pi i \xi_m k\Delta t}=e^{2\pi i mk/2N}$
for those $m$ for which $\xi_m\in q(\Omega)$. Then $\xi_m=\omega_m$ for $m$ in a subset of $[-N,N-1]$, and the formula above reduces to
$$
(\mathcal{I}_\mathrm{disc}^\mathrm{alt}(f_n))_k=\frac{1}{2N}\sum_{\xi_m\in q(\Omega)}\Bigg(\sum_{n=0}^{N-1}e^{-2\pi i nq^{-1}(\xi_m)\Delta t}f_n\Bigg)e^{2\pi i mk/2N}.
$$
(The absence of the factor $q'$ found in \eqref{discreteITDT} is explained by the relation 
$$
\Delta \xi=\xi_{m+1}-\xi_m=q(\widetilde \omega_{m+1})-q(\widetilde \omega_{m})\approx q'(\widetilde \omega_{m})(\widetilde \omega_{m+1}-\widetilde \omega_{m})=q'(\widetilde \omega_{m})\Delta\widetilde\omega,
$$
where $\widetilde\omega_m$ is the preimage of $\xi_m\in q(\Omega)$.)
It is easy to see that with $q$ as in Example \ref{ex:implementation2} this results in
$$
(\mathcal{I}_\mathrm{disc}^\mathrm{alt}(f_n))_k=\frac{1}{2N}\sum_{\lvert m\rvert\le M}\Bigg(\sum_{n=0}^{N-1}e^{- i n\arcsin(\pi m/N)}f_n\Bigg)e^{2\pi i mk/2N},
$$
where $M$ is the largest integer such that $M\le N/\pi$. Here, the inner sum is a modified discrete Fourier transform while the outer sum can be computed using the inverse FFT.
\end{rmk}

\section{Numerical simulations of a model equation}\label{section:numsim}

Here we propose to examine the accuracy of the method by solving a family of ordinary differential equations with known analytic solutions and comparing the resulting numerical solutions, corrected to account for dispersion, with simulations of the analytic expressions. To describe the method's inherent limitation that comes from restricting the frequency support of the adjusted source term (see the discussion after Theorem \ref{thm:mainthm}), we shall perform tests with source terms of varying frequency support. We consider the simple model
\begin{equation}\label{clinicalcauchyG}
\left\{\begin{aligned} u'(t)+u(t)&=f(t),\\
u(t)&\equiv 0,\quad t\le0,
\end{aligned}\right.
\end{equation}
where the source $f$ is a modulated Gaussian window function given by
\begin{equation}\label{eq:Gaussian}
f(t)=\frac{1}{\sqrt{2\pi\sigma^2}}\exp\left(-\frac{(t-\mu)^2}{2\sigma^2}\right)\times \exp\left (2\pi i a(t-\mu)\right).
\end{equation}
This is the probability density function of a normal distribution with mean $\mu$ and variance $\sigma^2$, modulated by the factor $\exp\left (2\pi i a(t-\mu)\right)$ with modulation parameter $a$ controlling the location of the frequency support of $f$.\footnote{In contrast to the wave packets discussed in the remark on page \pageref{rmk:FIO} and in Appendix \ref{ss:FIO}, the parameter $a$ is a priori independent of $\Delta t$. In addition, in line with the conventions of probability theory, the factor of normalization has been taken here with respect to the usual $L^1$ norm instead of the $L^2$ norm. } Because of the initial condition in \eqref{clinicalcauchyG}, the source term should vanish for $t\le0$. Moreover, in most applications that we have in mind, the energy is assumed to have dissipated by the end of the experiment. For these reasons we will center $f$ at (say) $t=5$ by taking $\mu=5$, take $\sigma$ so small that $f(t)$ is (practically) zero for $t\le0$, and run the experiment well past $t=10$. In particular, if $H$ is the Heaviside function then we will not distinguish between the functions $f(t)$ and $H(t)f(t)$ in what follows. Taking Fourier transforms we see that if $u$ is a solution of \eqref{clinicalcauchyG} then
$$
\widehat u(\omega)=\frac{1}{2\pi i\omega+1}\widehat f(\omega),
$$
where we identify the first factor on the right as the Fourier transform of $t\mapsto h(t)=e^{-t}H(t)$. By the Fourier inversion formula it follows that $u$ is given by the convolution
\begin{equation}\label{artificialsolution}
u(t)=h\ast f(t)=e^{-t}\int_{-\infty}^{t} e^{s}f(s)\, ds,\quad t>0.
\end{equation}
Applying the methodology presented in Section \ref{section:theory} we shall compare a sample of $u(t)$ for $0\le t\le 20$ with a numerically computed solution using the time dispersion transforms. To this end, we consider 
\begin{equation}\label{clinicalcauchyFD}
\left\{\begin{aligned} \DDe v+v&=g,\quad t>0,\\
v(t)&\equiv0,\quad t\le0,
\end{aligned}\right.
\end{equation}
where $\DDe$ is the central difference operator (appearing in Example \ref{ex:implementation2}) given by
\begin{equation*}
\DDe v(t)=\frac{v(t+\Delta t)-v(t-\Delta t)}{2\Delta t},
\end{equation*}
and $g=(g_k)$ is the FTDT of $f$, i.e.,
$$
g_k=(\FTDT(f))_k=\frac{1}{2N}\sum_{m=-N/2}^{N/2}\Bigg(\sum_{n=0}^{N-1}e^{-i n\sin(\pi m/N) }f(n\Delta t)\Bigg)e^{2\pi i mk/2N },
$$
compare with \eqref{discreteFTDTfast}. The sample $g$ is computed using the implementation of the FTDT described in Appendix \ref{ss:centralFD}. After solving \eqref{clinicalcauchyFD} we finally compute the ITDT of $v=(v_n)$ using formula \eqref{discreteITDTfast}, again implemented as described in Appendix \ref{ss:centralFD}. To minimize potential wraparound effects resulting from using the dispersion transforms (inherited from the FFT and the inverse FFT) on a modulated source function, we will solve the difference equation for $0\le t\le 20$ and apply a tapered cosine window, affecting the final sample points when $18\le t\le 20$, before computing the ITDT of the result. For transparency, we include plots obtained both with and without this taper.

\subsection{Varying the modulation}\label{ss:mod}
In Figure 
\ref{fig1}a we display the analytic solution $u(t)$ computed using \eqref{artificialsolution} and sampled at $t=n\Delta t$ with $\Delta t=0.02\, \mathrm{s}$. The source function $f$ was chosen to have mean $\mu=5\, \mathrm{s}$, variance $\sigma^2=0.1$ and modulation $a=0$. We furthermore show the numerical approximation of $u(t)$ and its error due to the standard central finite difference scheme and the forward Euler scheme. Finally, we use the time dispersion transform method to compute the solution, and show the difference between $u(t)$ and $\ITDT(v)(t)$ with and without using a taper. The numerical results are computed on a desktop with Intel Xeon CPU E5-1650 v3 $@$ $3.50\, \mathrm{GHz}$, running MATLAB 2017. It takes 0.083 seconds to compute and apply the FTDT to the source function; 0.00049 seconds for the 1000 time integration steps; and 0.077 seconds to compute and apply the ITDT to the solution vector. The time dispersion transform method outperforms the standard schemes by at least 9 orders of magnitude -- when the taper is used we even obtain accuracy up to an order of $10^{-15}$ on the range $t\in[0,18]$.

\begin{figure}[!t]
	\centering
		\includegraphics[scale=.88]{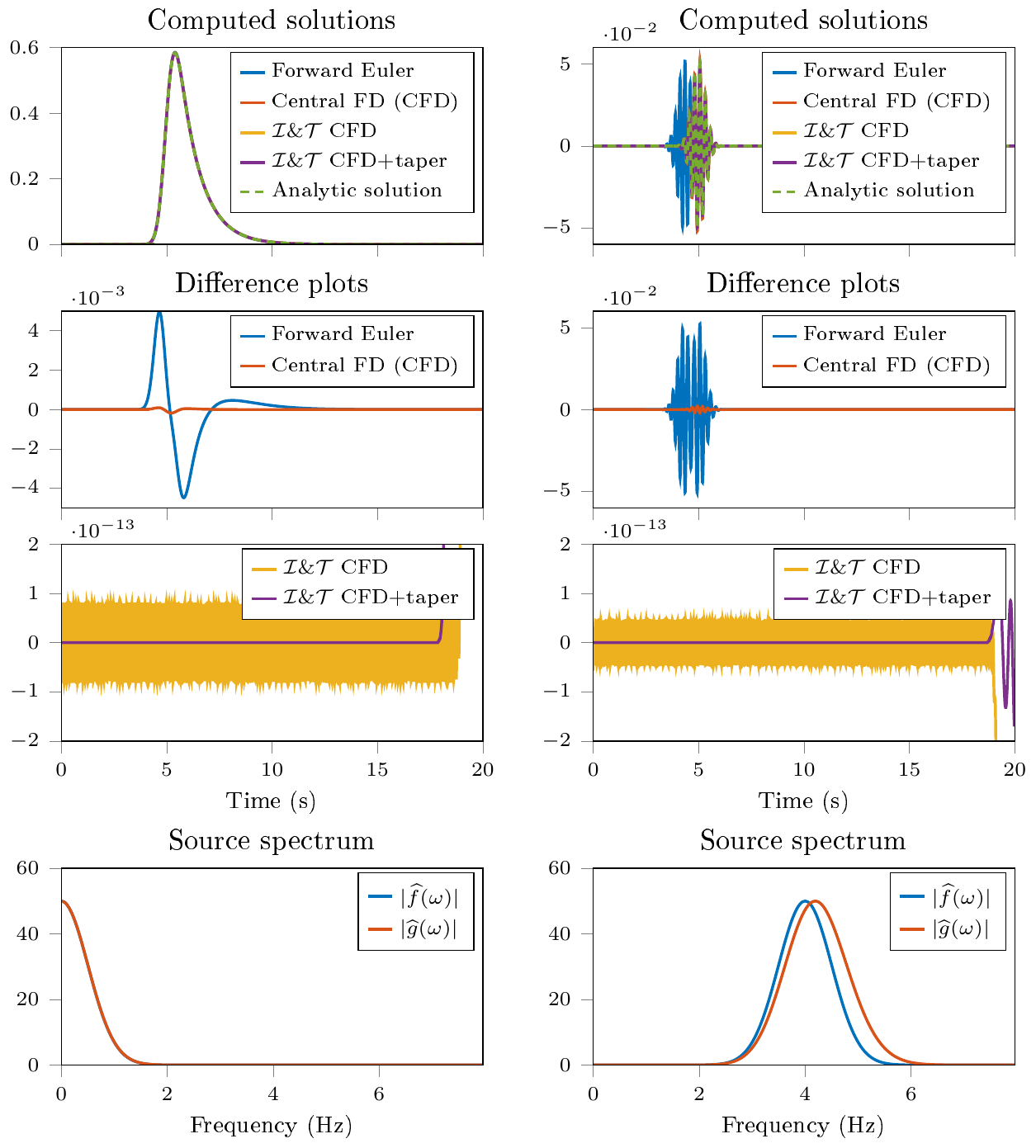}
		
		\hspace{0.45cm}\text{(a)}\hspace{5.45cm}\text{(b)}
\caption{Solving \eqref{clinicalcauchyG} numerically. The left (a) and right (b) panels correspond to an unmodulated and a modulated source, respectively. The difference plots show the difference between the analytic solution and the indicated numerical solutions. In the panels, Central FD (abbreviated CFD) refers to using a standard central finite difference scheme without applying the dispersion transforms, and is included for reference. $\IFT\&\FT$ CFD refers to the proposed method of using the forward time dispersion transform ($\FT$) to obtain the altered equation \eqref{clinicalcauchyFD}, solving this with the CFD, and applying the inverse time dispersion transform ($\IFT$) to the result. $\IFT\&\FT$ CFD+taper refers to doing the same with a taper added to the sample prior to applying the ITDT.}
\label{fig1}
\end{figure}

Figure 
\ref{fig1}b shows the result of adding a modulation by changing $a=0$ to $a=4$. We see that the Fourier support of the source function $f$ still sits comfortably within the critical frequency set $q(\Omega)$, which for $\Delta t=0.02\, \mathrm{s}$ is given by $q(\Omega)=\{\omega:\lvert \omega\rvert\le 25/\pi\}$ with $\omega$ measured in $\mathrm{Hz}$. The method continues to perform remarkably well, particularly in comparison with the forward Euler and central finite difference schemes. The computation time is identical to the previous case.

It should be mentioned that the discrete Fourier transform (and its implementation, the FFT) can be viewed as a trapezoidal rule quadrature applied to the Fourier transform integrals. For bandlimited functions, the approximation order is superalgebraic (meaning infinite approximation order) once the sampling is finer than what the bandlimitation prescribes. 
Implementing the transforms using the FFT and its inverse therefore makes the proposed method especially well suited for the type of essentially bandlimited source functions considered here and helps explain the high quality of these results.

\begin{figure}[!t]
	\centering
		\includegraphics[scale=.88]{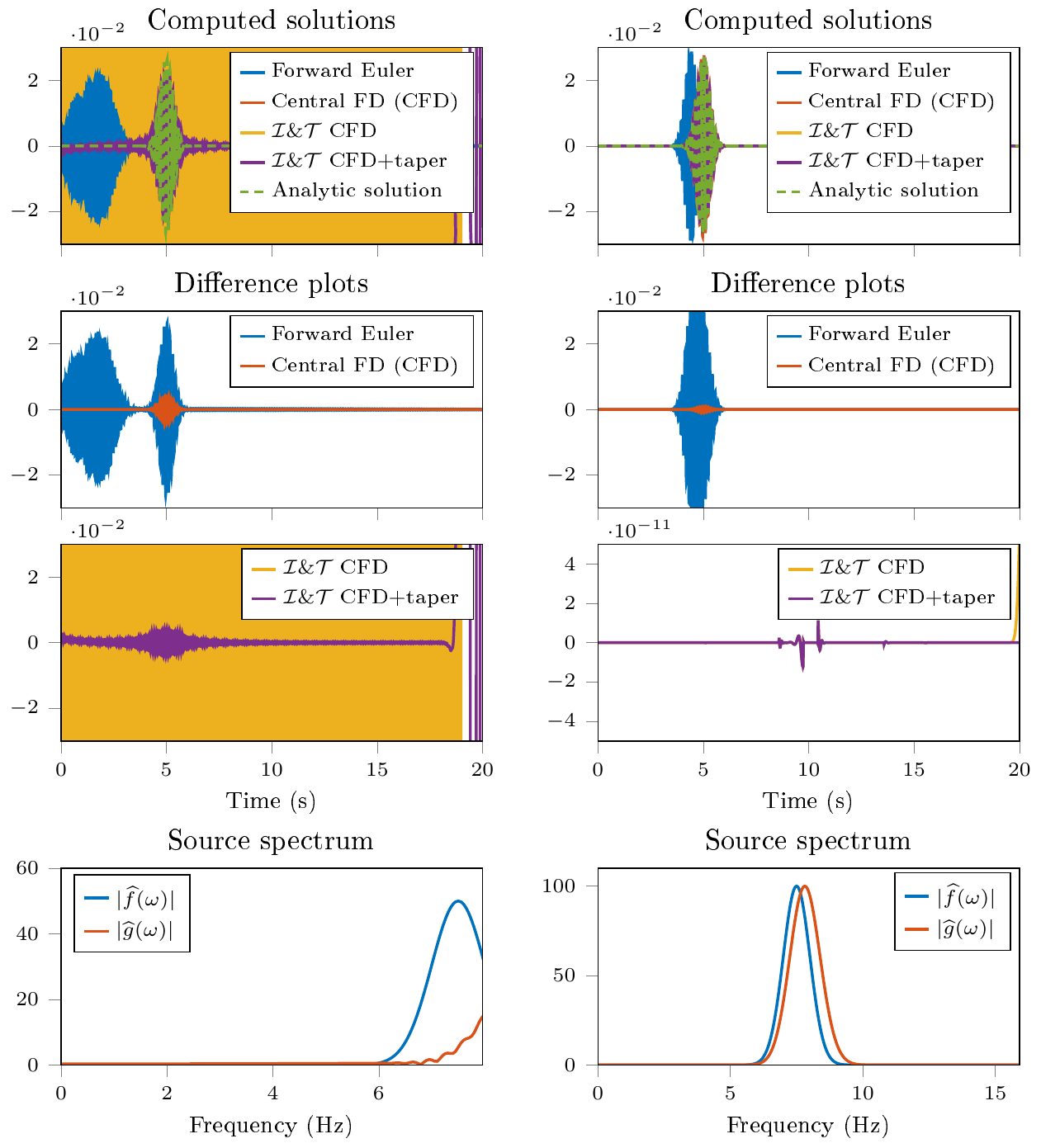}

		\hspace{0.45cm}\text{(a)}\hspace{5.45cm}\text{(b)}
\caption{Solving \eqref{clinicalcauchyG} numerically. The left panels (a) show the effect of a source with too large frequency support for the given spacing $\Delta t$. The right panels (b) show the effect of refining the spacing $\Delta t$ to accommodate. In the panels, Central FD (abbreviated CFD) refers to using a standard central finite difference scheme without applying the dispersion transforms, and is included for reference. $\IFT\&\FT$ CFD refers to the proposed method of using the forward time dispersion transform ($\FT$) to obtain the altered equation \eqref{clinicalcauchyFD}, solving this with the CFD, and applying the inverse time dispersion transform ($\IFT$) to the result. $\IFT\&\FT$ CFD+taper refers to doing the same with a taper added to the sample prior to applying the ITDT.}
\label{fig2}
\end{figure}

\subsection{Varying the frequency support}

In Figure 
\ref{fig2}a we have tried to break the method by setting $a=7.5$. We see that a part of the Fourier support of the source function $f$ is now outside the critical frequency set $q(\Omega)=\{\omega:\lvert \omega\rvert\le 25/\pi\}$ and the reconstruction of the analytic solution is quite poor. This is in part due to the strong oscillations of $f$; we see in Figure 
\ref{fig2}a that the Euler scheme also completely breaks down. However, we see that adding a taper results in partial recovery. The computation time is identical to the previous two cases.

As explained in the paragraph following \eqref{discreteFTDT}, we can improve the recovery by decreasing $\Delta t$, thus making sure that $q(\Omega)$ is large enough to capture the most relevant frequency content of $f$. The result of taking $\Delta t=0.01\, \mathrm{s}$ and keeping all other parameters the same can be seen in Figure 
\ref{fig2}b. Again, our proposed method performs at least 8 orders of magnitude better than the standard schemes. It takes 0.215 seconds to compute and apply the FTDT to the source function; 0.0175 seconds to compute the 2000 time integration steps; and 0.240 seconds to compute and apply the ITDT to the solution.

\section{Viscoelastic wave simulation}
\label{section:viscoelastic}

Here we test the accuracy of the method on seismic wave simulation, with the purpose of supplementing the previous section with a more realistic situation in which an analytic expression for the sought solution is not available. For comparison we provide simulations both of non-dissipative (elastic) as well as dissipative (viscoelastic) waves.

\subsection{The viscoelastic equations}\label{ss:memory}

A common approach to model wave propagation in anelastic media exhibiting both elastic and viscous behavior is to use viscoelastic theory \cite{robertsson1994viscoelastic}. For elastic media, it is common to use the analogy of a spring to model the medium under strain (by assuming that Hooke's law of proportionality between force and displacement holds). This is a linearized assumption that holds well for small displacements such as those found for seismic waves. In tensor notation, each component of the stress tensor will then be a linear combination of all components of the strain tensor. 
For viscoelastic media, the spring analogy is replaced by the analogy of a spring and dashpot in series, in parallel with another spring.
The resulting model is known as a {\it standard linear solid},
and several standard linear solids can be connected in
parallel to emulate a desired viscoelastic behavior. 
In a viscoelastic medium, then, each component of the stress tensor will be a linear combination of the entire history of the strain tensor, rather than only its current value. As it is memory-intensive to store the entire strain history, the system of equations is typically reformulated with the use of two constitutive relations, one that expresses the stress as a function of strain and a so-called memory variable, another that expresses the memory variable in terms of the strain and the memory variable itself. 
Below we recall the resulting equations in two and three dimensions; however our findings also apply to the one-dimensional case. For details of the derivation we refer to \cite{robertsson1994viscoelastic}, \cite{blanch1995modeling} and \cite{bohlen2002parallel}.

Wave propagation in a viscoelastic medium with $N$ standard linear solids can be described by Newton's second law
\begin{equation}\label{Newton2nd3D}
\rho\partial_tv_i=\partial_j\sigma_{ij}+f_i
\end{equation}
together with the stress-strain relation
\begin{align}\label{sigma}
\partial_t\sigma_{ij}&=\partial_kv_k\{\pi(1+\tau^p)-2\mu(1+\tau^s)\}+2\partial_jv_i\mu(1+\tau^p)\\&\quad+\frac{1}{N}\sum_{n=1}^Nr_{ij n}\quad \text{if }i=j,\notag\\
\partial_t\sigma_{ij}&=\big(\partial_jv_i+\partial_iv_j\big)\mu(1+\tau^s)+\frac{1}{N}\sum_{n=1}^Nr_{ij n}\quad \text{if }i\ne j,
\notag
\end{align}
with the so-called memory equations:
\begin{align}\label{memory}
\partial_t{r}_{ijn}&=-\frac{1}{\tau_{\sigma n}}\left\{ r_{ijn}+(\pi\tau^p-2\mu\tau^s)\partial_kv_k+2\partial_jv_i\mu\tau^s\right\}\quad\text{if }i=j,\\
\partial_t{r}_{ijn}&=-\frac{1}{\tau_{\sigma n}}\left\{ r_{ijn}+\mu\tau^s(\partial_jv_i+\partial_iv_j)\right\}\quad\text{if }i\ne j.\notag
\end{align}
In the equations above and throughout this section we employ Einstein notation and sum over repeated indices. The meaning of the symbols is as follows:
\begin{itemize}
\item[$\rho$] is the density,
\item[$\sigma_{ij}$] denotes the $ij$:th component of the stress tensor ($i,j=1,2$ in two dimensions and $i,j=1,2,3$ in three dimensions),
\item[$v_i$] denote the components of the particle velocity,
\item[$f_i$] are the components of external body force,
\item[$r_{ijn}$] are the $N$ memory variables $(n=1,\ldots,N$) corresponding to the stress tensor $\sigma_{ij}$,
\item[$\tau_{\sigma n}$] is the stress relaxation time of the $n$:th standard linear solid for both pressure and shear waves,
\item[$\tau^p$, $\tau^s$] define the level of dissipation for pressure and shear waves, respectively,
\item[$\pi$] denotes the relaxation modulus corresponding to pressure waves analogous to $\lambda+2\mu$ in the elastic case, where $\lambda$ and $\mu$ are the Lam{\'e} parameters,
\item[$\mu$] is the relaxation modulus corresponding to shear waves and is the analog of the Lam{\'e} parameter $\mu$ in the elastic case.
\end{itemize}
The equations describe the propagation of mechanical waves through a solid, in terms of strains (particle displacements away from their resting position) and stress disturbances (away from the reference stress states), as a function of spatially varying material properties. In geophysics, these equations can be used to model the propagation of seismic waves through realistic dissipative Earth models. The viscoelastic behavior is governed by the $2+N$ parameters $\tau^p$, $\tau^s$ and $\tau_{\sigma n}$, $1\le n\le N$, which all depend on the spatial variable $x$. Here $\tau^p$ and $\tau^s$ are computed using a variable so-called $Q$ model consisting of one component $Q_p$ for the pressure wave and one component $Q_s$ for the transverse shear wave via
\begin{equation}\label{taus}
\tau^p=2/Q_p,\quad \tau^s=2/Q_s.
\end{equation}
$Q$ is a quality factor that approximately measures the amount of energy dissipation \cite{bohlen2002parallel}, with $Q_p,Q_s\to\infty$ corresponding to the elastic, undamped case.

We end by noting that \eqref{Newton2nd3D}--\eqref{memory} constitutes a system of equations of the form \eqref{evo}. Indeed, considering the two-dimensional (2D) case, denote $v_1,v_2$ by $u_1,u_2$, denote the three distinct $\sigma_{ij}$, $i,j=1,2$, by $u_3,u_4,u_5$, and the $3N$ distinct memory variables $r_{ijn}$ by $u_{6},\ldots,u_{3N+5}$. Finally, we note by inspecting the equations above that the spatial operators involved are linear and independent of $t$.

\subsection{Model introduction}
We apply the theory of the previous section on a viscoelastic wave modeling example, using the leapfrog scheme described in detail in Appendix \ref{3dvisco} to solve \eqref{Newton2nd3D}--\eqref{memory}. Since leapfrog schemes are the simplest energy-conserving integrators \cite{hairer2006geometric} this is a natural choice in order to avoid numerical dissipation errors and thus isolate the effects caused by numerical dispersion errors.

We use the open-source 2D modeling engine SOFI2D developed by Bohlen {\it et al.}~\cite{SOFI2Dmanual} to perform the 2D viscoelastic simulation. Time is discretized into steps of constant length $\Delta t$. Similarly, continuous space is discretized into a 2D grid with spacing of $\Delta x$ and $\Delta z$ in the $x$ and $z$ directions. The wave equation is then solved using staggering of quantites in space, and using the leapfrog method to integrate the wave equation in time \cite{virieux1986p}. The spatial derivatives are efficiently approximated with a central 1D finite difference stencil of half-order 6:
\begin{equation*}
  \partial_x f(x_j) \approx \sum_{l=1}^6 \frac{\alpha_l}{\Delta x_j}\left( f(x_j+(l-\tfrac{1}{2})\Delta x_j)-f(x_j-(l-\tfrac{1}{2})\Delta x_j) \right),
\end{equation*}
for which the weights are given in Table \ref{tab:fdweights}.
\begin{table}[!ht]
\caption{The central finite difference weights used to compute the spatial first-order derivatives, truncated to 4 digits.}\label{tab:fdweights}
\begin{center}
\begin{tabular}{cccccc}\noalign{
\global\dimen1\arrayrulewidth
\global\arrayrulewidth1pt
}\hline
\noalign{
\global\arrayrulewidth\dimen1
}
$\alpha_1$&$\alpha_2$&$\alpha_3$&$\alpha_4$&$\alpha_5$&$\alpha_6$ \vspace{1pt} \\ \hline \vspace{-3mm}\\
1.2508 & $-$0.1203 & 0.0321 & $-$0.0101 & 0.0030 & $-$0.0007 \\  \noalign{
\global\dimen1\arrayrulewidth
\global\arrayrulewidth1pt
}\hline
\noalign{
\global\arrayrulewidth\dimen1
}
\end{tabular}
\end{center}
\end{table}
These weights correspond to an equiripple (minimax) filter that keeps the group-velocity error of the first-order derivative approximation confined to within 0.1\%. Such `optimal' finite difference coefficients are customary in geophysical finite difference modeling \cite{moczo2007}, see e.g.~\cite{kindelan1990construction} for the design procedure. We state the Courant-Friedrichs-Lewy (CFL) condition that ensures stability of the 2D simulation given the second-order accurate integration of the equations in time, as a function of the chosen discretizations and maximum velocity encountered in the simulation:
\begin{equation*}
  v_\text{max}\Delta t\sqrt{ \left(\sum_{l=1}^6\left|\frac{\alpha_l}{\Delta x}\right|\right)^2+\left(\sum_{l=1}^6\left|\frac{\alpha_l}{\Delta z}\right|\right)^2} \leq 1.
\end{equation*}
We will see that the maximum velocity present in the model is $4700\, \mathrm{m/s}$, and we choose a spatial discretization of $\Delta x=\Delta z=12.5\, \mathrm{m}$. The maximum stable time-step then follows as $\Delta t=1.3\, \mathrm{ms}$. We choose this as the `coarse' time-step. We can compare this coarse solution against an additional `fine' simulation, which uses a time-step of $\Delta t=0.013\, \mathrm{ms}$, which we consider to be the reference solution for our purposes.

The model used for the simulation is the Marmousi 2 model \cite{martin2002marmousi}, which provides a density model ($\rho$), a model for compressional wave velocities ($v_p$) and transverse shear velocities ($v_s$) which reflect the instantaneous elastic deformation modes for \eqref{Newton2nd3D}--\eqref{memory}:
\begin{align*}
  \pi  &=  \rho v_p^2,\\
  \mu&= \rho v_s^2.
\end{align*}
The models for $\rho$, $v_p$ and $v_s$ are shown in Figure \ref{fig:marmousimodel}a. For the viscoelastic modeling we furthermore create the $Q$ model used in \eqref{taus} by smoothing the $v_s$ and $v_p$ models and normalizing them to a maximum $Q$ of 350, as shown in Figure \ref{fig:marmousimodel}b. 
Additionally shown in the figures are the source location at $(x,z)=(0,25)$ and a series of recorders along the entire upper model boundary at $(x,z)=(n\cdot 12.5,62.5)$ for $n=-679,\dots,679$, with the coordinates in meters. One specific recorder at $(x,z)=(4625,62.5)$ is highlighted as an arbitrarily shown recorder that will be zoomed in upon in the results.
\begin{figure}[!t]
  \centering
  \includegraphics{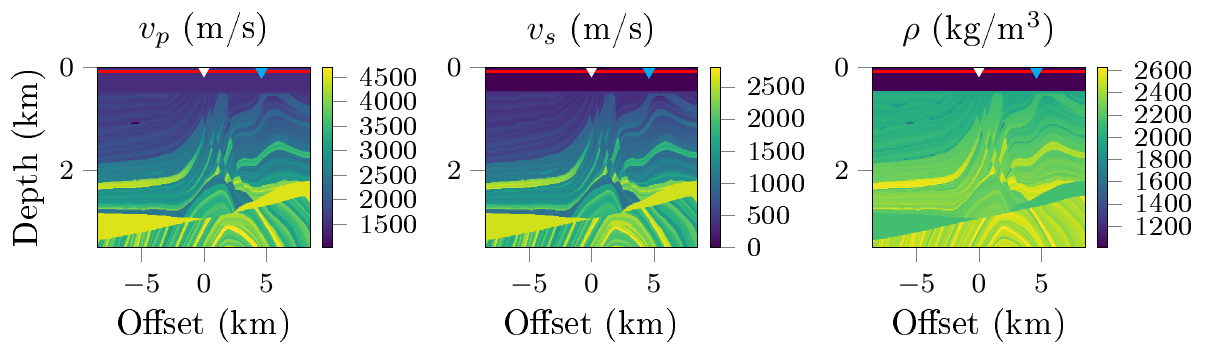}\\
  \text{(a)} The elastic Marmousi model.

  \includegraphics{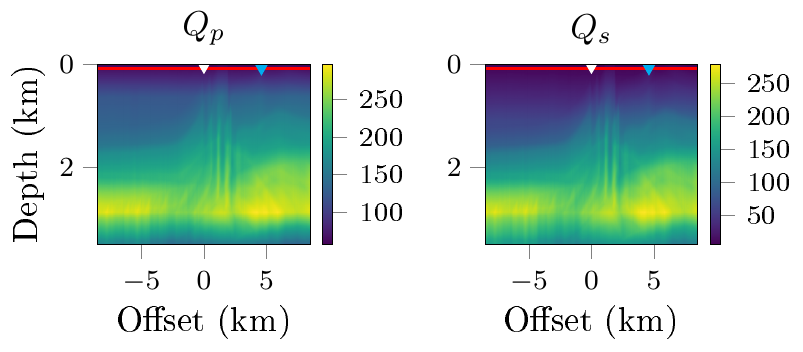}\\
  \text{(b)} The $Q$ model for the viscoelastic case.

  \caption{The compressional wave velocity ($v_p$), transverse shear wave velocity ($v_s$) and density ($\rho$) of the Marmousi 2 elastic model in \text{(a)}, and the $Q$ model for the compressional wave ($Q_p$) and the transverse shear wave ($Q_s$) in \text{(b)}. The source location is denoted by a white triangle in the top center of the models, the receiver line is denoted by the red line on the top of the models, a single recording station is denoted in the cyan triangle in the top-right of the models, which is used for the detailed zoom in Figures \ref{fig:elastic}--\ref{fig:viscoelastic}.}
  \label{fig:marmousimodel}
\end{figure}

The source-time function of the model is a typical seismic source wavelet, described as a $15\, \mathrm{Hz}$ peak frequency Ricker wavelet with a time-delay of $0.15\,\mathrm{s}$:
\begin{equation}\label{eq:rickerwavelet}
  f(t) = (1-2\pi^215^2(t-0.15)^2)e^{-\pi^215^2(t-0.15)^2}. 
\end{equation}
The source is injected as an explosive source that radiates equally in all directions. The recorders along the upper model boundary record the pressure variations (the diagonal stress components $\sigma_{xx}+\sigma_{zz}$) as a function of time at every $\Delta t$ simulated. 

\subsection{The elastic model results}
We first test the theory in an elastic case, in which we take $Q\to\infty$ so there is no damping, and with $N=0$ so there is no relaxation mechanism at all. The evolution of the wave equation is then computed from time 0 to $5\,\mathrm{s}$ with three model runs:
\begin{enumerate}
  \item using a coarse time-step of $\Delta t=1.3\,\mathrm{ms}$ without correcting the source or receiver time-series,
  \item using a coarse time-step of $\Delta t=1.3\,\mathrm{ms}$, but using the FTDT 
  to correct the source injection time-series \eqref{eq:rickerwavelet}, and the ITDT 
  to correct the recorded time-series,
  \item using a fine time-step of $\Delta t=0.013\,\mathrm{ms}$ as a reference solution.
\end{enumerate}
As the implemented wave equation solver scales linearly in time (keeping the spatial discretization $\Delta x=\Delta z=12.5\,\mathrm{m}$ in place), the third simulation thus costs 100 times more computational time. In this elastic instance, it takes 50 seconds to compute simulations (1) and (2), but 75 minutes for the simulation (3). Application in simulation (2) of the FTDT 
on the source-time function takes half a second to compute, and applying the ITDT 
to \emph{all} 1359 recorded signals takes seven seconds in total. This is, essentially, of negligible cost compared to the fine simulation (3).

\begin{figure}[!t]
  \centering
  \includegraphics{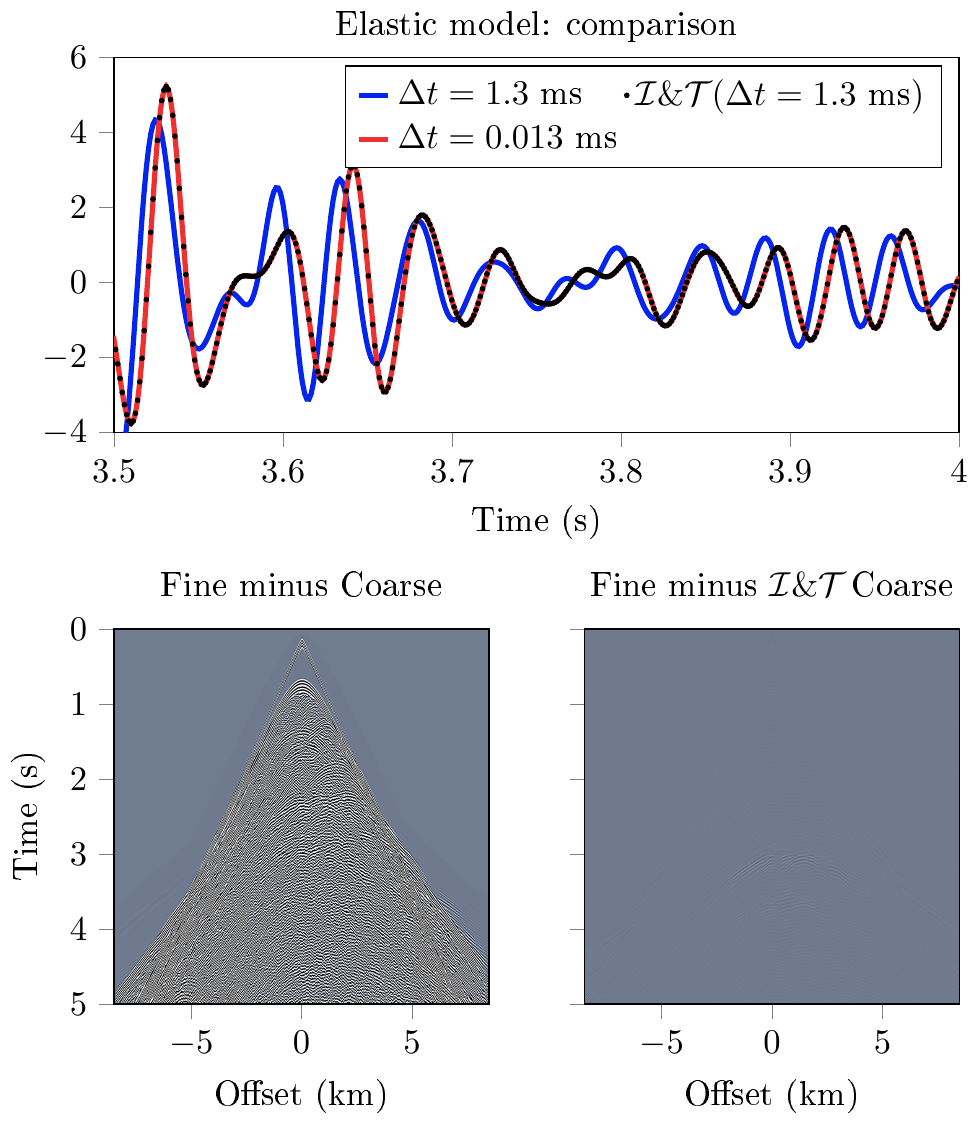}
  \caption{Removing dispersion for an elastic case. The top panel zooms in on a single recorder at $(4625,62.5)$ (a so-called `trace' at offset $4.625\,\mathrm{km}$). The graph in blue represents the coarse time-step solution, the graph in red the reference solution with a fine time-step. The graph in black dots uses the dispersion transforms to modify the source and recording of another coarse simulation as suggested in this paper. The bottom two panels show the difference between the three different data sets for all 1359 recordings at varying offset. }
  \label{fig:elastic}
\end{figure}

After finishing all the computations, we subsample the fine simulation to be able to compare the results sample-by-sample. The computed result is then shown in Figure \ref{fig:elastic}. We show a zoom on a single recording (its location is denoted in cyan in Figure \ref{fig:marmousimodel} but was chosen arbitrarily). It is clearly visible that the coarse simulation (1) created a recording that differs starkly from that made within the fine simulation (3). Conversely, after applying the time dispersion transforms on the source-time function and the receiver time-function, we obtain a solution that follows the correct phase and amplitude of the fine simulation. The two images below the graph in Figure \ref{fig:elastic} subtract the results of the coarse simulations from the fine simulation -- confirming that the correction procedure in this paper removes the dispersion error effectively for all recordings. The sum of all 1359 root mean square (RMS) errors along all traces is 1634 for the coarse case, and 1.6 for the simulation with the proposed time dispersion transforms -- the error energy is thus reduced by a factor 1018. The remaining errors seem to be of localized impact only, affecting strong peaks and throughs in the time-series, but do not seem to accumulate over time.

\begin{figure}[!t]
  \centering
\includegraphics{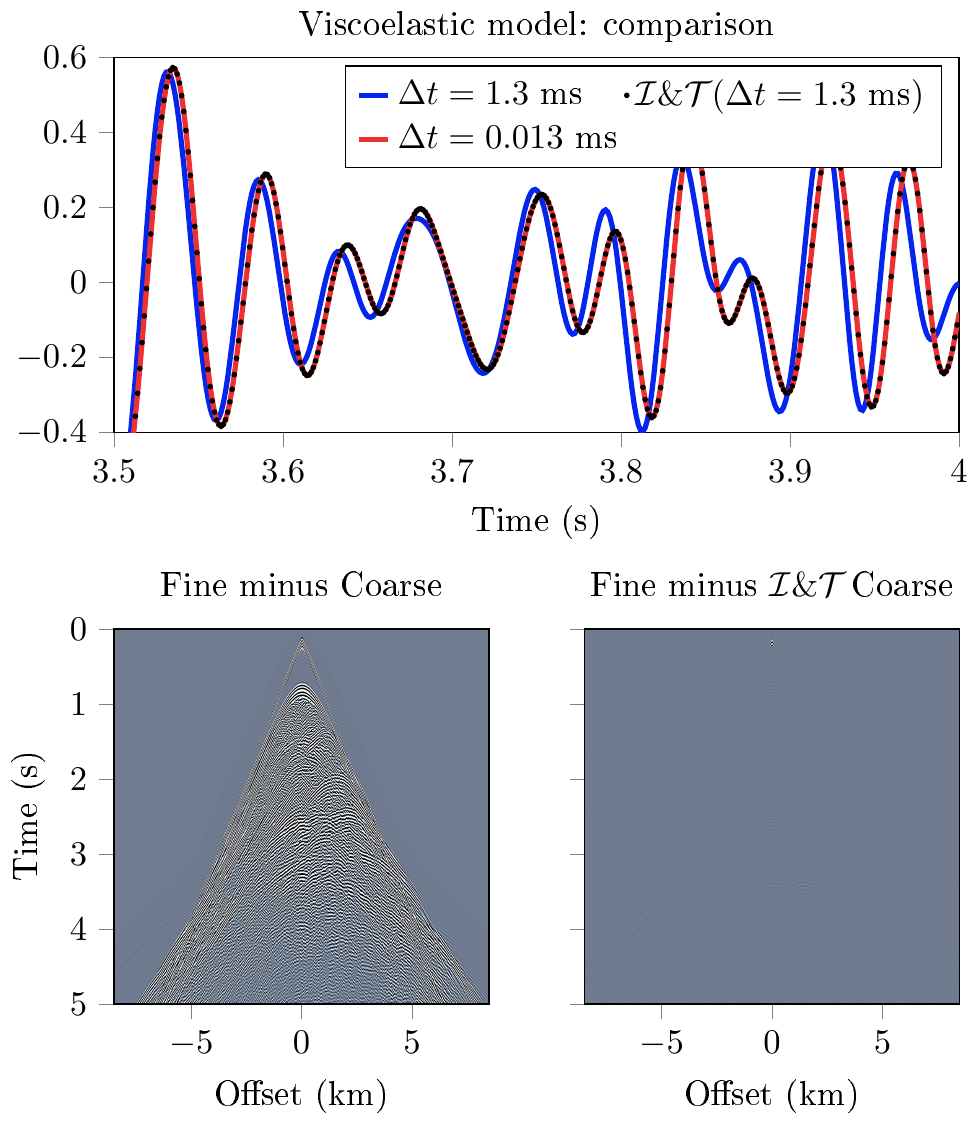}
  \caption{Removing dispersion for a viscoelastic case. The top of the figure zooms in on a single recorded trace at offset $4.625\,\mathrm{km}$. The graph in blue represents the coarse time-step solution, the graph in red the reference solution with a fine time-step. The graph in black dots uses the dispersion transforms to modify the source and recording of another coarse simulation as suggested in this paper. The bottom two figures show the difference between the three different data sets for all 1359 recordings at varying offset. }
  \label{fig:viscoelastic}
\end{figure}

\subsection{The viscoelastic model results}
The viscoelastic model uses three relaxation mechanisms ($N=3$) to model the spatially heterogeneous $Q$ model. Apart from these changes to the physical model, we proceed in exactly the same way as in the previous example. The computed result is displayed in Figure \ref{fig:viscoelastic}. The amplitudes in this model decrease with time as exemplified by the now 10 times smaller amplitudes in the graph compared to the elastic case, due to the damping. Like the elastic case before, the coarse simulation with $\Delta t=1.3\,\mathrm{ms}$ differs significantly from the fine simulation with $\Delta t=0.013\,\mathrm{ms}$. Conversely, applying the proposed corrections to the coarse simulation creates an adequate fit to the fine simulation. The computational time of all simulations is roughly doubled compared to the elastic simulation at 100 seconds for the coarse simulations and over 2 hours for the viscoelastic simulation. The FTDT and ITDT are still applied to the source-time function in half a second and to all 1359 traces in 7 seconds, respectively. The sum of all 1359 RMS errors along all traces is 212 for the coarse case, and 1.47 for the coarse case using the proposed transforms. Again, the error energy is reduced (now by a factor of 144) at very little additional cost. Again, these errors do not accumulate for longer simulation times.

\section*{Acknowledgements}
Jens Wittsten was supported by the Swedish Research Council grants 2015-03780 and 2019-04878. Erik Koene was supported by SNF grant 2-77220-15. 
We gratefully acknowledge the work of Thomas Bohlen, Denise De Nil, Daniel Köhn and Stefan Jetschny on the open-source code SOFI2D used in the viscoelastic simulations.  
We wish to thank Christof Stork for interesting discussions. We also thank the referee for valuable comments and suggestions.

\appendix

\renewcommand{\thesection}{\Alph{section}}
\renewcommand\thefigure{\arabic{figure}}

\section{Auxiliary results}

\subsection{Fourier integral operators}\label{ss:FIO}

Here we show how the dispersion transforms can be naturally understood as Fourier integral operators (FIO). In this context it will be convenient to view $\Delta t$ as a small (semiclassical) parameter $h>0$. We define the semiclassical Fourier transform of a function $f(t)$ by
$$
\mathcal F_h(f)(\eta)=\frac{1}{\sqrt h}\int e^{-2\pi i t\eta/h}f(t)\, dt,
$$
so that $\widehat f(\omega)=h^{\frac{1}{2}}\mathcal F_h(f)(\omega h)$. The Fourier inversion formula then takes the form
$$
f(t)=\frac{1}{\sqrt h}\int e^{2\pi i t\eta/h}\mathcal F_h(f)(\eta)\, d\eta.
$$
Standard references for semiclassical analysis are Martinez \cite{martinez2002introduction} and Zworski \cite{zworski2012semiclassical}.

Recall the normalized phase shift function $\q$ introduced in \eqref{eq:qnorm} which satisfies $\q(\omega h)/h=q(\omega)$, and define $\On$ by $\eta=\omega h\in\On$ if and only if $\omega\in\Omega$. By changing variables in \eqref{ITDTalt} it is easy to see that
\begin{equation}\label{semiclassicalI}
\IFT(f)(t)=\frac{1}{\sqrt h}\int_{\On}e^{2\pi i t\q(\eta )/h}  \q'(\eta )\mathcal F_h(f)(\eta )\, d\eta.
\end{equation}
Note that $q'(\omega)=\q'(\omega h)$ and $q^{-1}(\omega)=\q^{-1}(\omega h)/h$, which implies that $q'(q^{-1}(\omega))=\q'(\q^{-1}(\omega h))$, so making the change of variables $\eta=\omega h$ in \eqref{FTDT} similarly gives
\begin{equation}\label{semiclassicalT}
\FT(f)(t)=\frac{1}{\sqrt h}\int_{\q(\On)}e^{2\pi i t\q^{-1}(\eta)/h}\frac{1}{\q'(\q^{-1}(\eta))}\mathcal F_h(f)(\eta)\, d\eta.
\end{equation}

If $f$ is a function whose semiclassical Fourier transform has support contained in a set $U$ we will say that $f$ is $h$-bandlimited in $U$. Applying the ITDT to a function already $h$-bandlimited in $\On$ can be naturally understood, in view of \eqref{semiclassicalI}, as the action of a semiclassical FIO (call it $A$) which in $\R\times\On\subset T^\ast(\R)$ has phase function $\varphi(t,\eta)=t\q(\eta)$ and symbol $a(t,\eta)=\q'(\eta)$. $A$ is associated to the canonical transformation locally given by 
$$
\chi: (\varphi_\eta'(t,\eta),\eta)\mapsto (t,\varphi_t'(t,\eta)),
$$
i.e., by \eqref{eq:chi}.
Similarly,  $\FT$ is a semiclassical FIO (call it $B$) which in $\R\times \q(\On)$ has phase function $\psi(t,\eta)=t\q^{-1}(\eta)$ and symbol $b(t,\eta)=1/\q'(\q^{-1}(\eta))$. $B$ is associated to the inverse map $\chi^{-1}$. The composition $BA$ acts as the identity operator on functions $h$-bandlimited in $\On$. The composition $AB$ acts as the identity operator on functions $h$-bandlimited in $\q(\On)$.
Furthermore, using the Fourier inversion formula and arguments similar to those in the proof of Theorem \ref{thm:evo}, it is straightforward to check that $B\mathscr P_i =\PP_iB$, so that, as operators acting on functions $h$-bandlimited in $\On$, $\PP_i=B\mathscr P_i A$.

Note that the previous discussion can also be had in the framework of microlocal analysis for fixed $\Delta t$, i.e., without viewing $\Delta t$ as a semiclassical parameter. Our choice was made in preparation for \S\ref{ss:dynamics} below. If one instead takes the other viewpoint and repeats the arguments above one finds that the dispersion transforms are realized as FIOs associated to the canonical map
$$
\chi_q:(tq'(\omega),\omega)\mapsto(t,q(\omega))
$$
and its inverse. This gives a formula for the appropriate discrete operator to be used for given choice of discrete approximation $\DDe$ of the time derivative, even in the case when time-dependent coefficients are allowed in \eqref{evo}. In fact, if $q$ is the corresponding phase shift function, then the previous paragraph shows that $\mathscr P_i$ should be replaced by (a discretized version of)
$$
Q_i=\FT\mathscr P_i\IFT.
$$
By Egorov's theorem, this operator is a pseudodifferential operator with an integral representation
$$
Q_iv(t)=\int_\Omega e^{2\pi it\omega }Q_i(t,\omega)\hat v(\omega)\, d\omega,
$$
where, with abuse of notation, the symbol $Q_i(t,\omega)$ is a function defined on phase space (omitting all dependence on the spatial variable $x$). Assuming that $\mathscr P_i=p_{in_i}(t)\partial_t^{n_i}$ plus lower order terms, the principal symbol $\sigma(Q_i)$ of $Q_i$ is given by
$$
\sigma(\mathscr P_i)(\chi_q(t,\omega))=p_{in_i}(t/q'(\omega))(2\pi iq(\omega))^{n_i}.
$$
The lower order terms of $Q_i$ can be expressed in terms of $\chi_q$ and derivatives of the symbol of $\mathscr P_i$. However, due to the dependence of $\omega$ for example in $p_{in_i}(t/q'(\omega))$, the two factors on the right cannot be separated 
in such a way that
the operator $\FT\mathscr P_i\IFT$ 
is directly
realized as a finite difference operator. Investigating the case of time-dependent coefficients is therefore beyond the scope of the current paper and will not be pursued further here.

\subsubsection{Dynamics of wave packets}\label{ss:dynamics}
For $(x_0,\xi_0)\in T^\ast(\R)$, a function of the form
$$
\varphi_{(x_0,\xi_0)}(t)=(2/h)^{\frac{1}{4}}
e^{2\pi i (t-x_0)\xi_0/h}e^{-\pi (t-x_0)^2/h}
$$
will be called a {\it Gaussian wave packet}. Here 
$\varphi_{(x_0,\xi_0)}$ has been normalized with respect to the usual inner product in $L^2(\R)$. We see that when $h\ll 1$, $\varphi_{(x_0,\xi_0)}(t)=O(h^\infty)$ is negligible if $t\ne x_0$, where $O(h^\infty)$ means $O(h^N)$ for all $N>0$. Similarly,
\begin{equation}\label{eq:Fourierwavepacket}
\mathcal F_h(\varphi_{(x_0,\xi_0)} )(\eta)=
(2/h)^{\frac{1}{4}}e^{-2\pi i x_0\eta/h}e^{-\pi (\eta-\xi_0)^2/h}
\end{equation}
is negligible if $\eta\ne\xi_0$. These notions are combined in the following definition.

\begin{dfn}
Let $u=u(h)$, $0<h\le h_0$, be a family of functions in $L^2(\R)$. We say that $u$ is {\it microlocally small} near $(x_0,\xi_0)\in T^\ast(\R)$ if the inner product
$$
\lvert( u,\varphi_{(x,\xi)})_{L^2(\R)}\rvert=\bigg\lvert\int_\R u(t)e^{-2\pi i (t-x)\xi/h}e^{-\pi(t-x)^2/h} \,dt\bigg\rvert=O(h^\infty)
$$
uniformly for $(x,\xi)$ in a neighborhood of $(x_0,\xi_0)$. The complement of such points $(x_0,\xi_0)$ is called the {\it semiclassical wavefront set} of $u$, denoted $\mathrm{WF}_h(u)$.
\end{dfn}

Another common name for the semiclassical wavefront set is {\it frequency set}, usually denoted $\mathrm{FS}(u)$. For other equivalent definitions, including those employing the Fourier-Bros-Iagolnitzer (FBI) transform we refer to Martinez \cite{martinez2002introduction} and Zworski \cite{zworski2012semiclassical}. Our presentation is inspired by Faure \cite{faure2011semiclassical}.

As alluded to above, $\mathrm{WF}_h(\varphi_{(x_0,\xi_0)} )=\{(x_0,\xi_0)\}$ which is made evident by computing the inner product
$$
(\varphi_{(x_0,\xi_0)},\varphi_{(x,\xi)})_{L^2(\R)}=e^{-2\pi i (x_0-x)(\xi_0+\xi)/2h}e^{-\pi(x_0-x)^2/2h}e^{-\pi(\xi_0-\xi)^2/2h}.
$$
The following result describes how the wavefront set of a Gaussian wave packet is affected by the dispersion transforms.

\begin{proposition}\label{prop:wavefrontset}
Let $\varphi_{(x_0,\xi_0)}$ be a Gaussian wave packet, and let $\chi$ be the canonical map given by \eqref{eq:chi}. Then
\begin{align*}
\mathrm{WF}_h(\IFT(\varphi_{(x_0,\xi_0)}))&=\{\chi(x_0,\xi_0)\}=\{(x_0/\q'(\xi_0),\q(\xi_0))\},\\
\mathrm{WF}_h(\FT(\varphi_{(x_0,\xi_0)}))&=\{\chi^{-1}(x_0,\xi_0)\}=\{(x_0\q'(\q^{-1}(\xi_0)),\q^{-1}(\xi_0))\}.
\end{align*}
\end{proposition}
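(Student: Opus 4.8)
The plan is to compute the defining inner products $(\IFT(\varphi_{(x_0,\xi_0)}),\varphi_{(x,\xi)})_{L^2(\R)}$ and $(\FT(\varphi_{(x_0,\xi_0)}),\varphi_{(x,\xi)})_{L^2(\R)}$ explicitly as oscillatory integrals in $h$, and to show by a complex stationary phase argument that each is $O(h^\infty)$ \emph{locally uniformly} for all $(x,\xi)$ except at the single point $\chi(x_0,\xi_0)$, respectively $\chi^{-1}(x_0,\xi_0)$, where the modulus stays bounded away from zero. Throughout I assume $\xi_0$ lies in the interior of $\On$ (resp.\ $\q(\On)$), so that the relevant Gaussian concentrates in the interior of the domain of integration and the cutoffs $\mathbf 1_{\On},\mathbf 1_{\q(\On)}$ play no role for small $h$.

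First I would pass to the Fourier side. Since $\mathcal F_h$ is unitary on $L^2(\R)$, we have $(\IFT(\varphi_{(x_0,\xi_0)}),\varphi_{(x,\xi)})_{L^2}=(\mathcal F_h\IFT(\varphi_{(x_0,\xi_0)}),\mathcal F_h\varphi_{(x,\xi)})_{L^2}$. Rewriting \eqref{semiclassicalI} via the substitution $\zeta=\q(\eta)$ turns it into an inverse semiclassical Fourier transform, which yields the exact identity
\begin{equation*}
\mathcal F_h(\IFT(f))(\zeta)=\mathbf 1_{\q(\On)}(\zeta)\,\mathcal F_h(f)(\q^{-1}(\zeta)).
\end{equation*}
Inserting the explicit expression \eqref{eq:Fourierwavepacket} for $\mathcal F_h(\varphi_{(x_0,\xi_0)})$ and for $\mathcal F_h(\varphi_{(x,\xi)})$, and changing variables back to $\eta=\q^{-1}(\zeta)$, the inner product becomes the integral over $\On$ of the form
\begin{equation*}
\sqrt{2/h}\int_{\On}e^{2\pi i\Phi(\eta)/h}\,e^{-\pi(\eta-\xi_0)^2/h}\,e^{-\pi(\q(\eta)-\xi)^2/h}\,\q'(\eta)\,d\eta,\qquad \Phi(\eta)=x\q(\eta)-x_0\eta.
\end{equation*}

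Next I would analyze this integral as $h\to0$. The factor $e^{-\pi(\eta-\xi_0)^2/h}$ concentrates the integrand in an $O(\sqrt h)$ neighbourhood of $\eta=\xi_0$; on that neighbourhood the second Gaussian $e^{-\pi(\q(\eta)-\xi)^2/h}$ is negligible unless $\xi$ lies within $O(\sqrt h)$ of $\q(\xi_0)$, which pins down the frequency coordinate $\xi=\q(\xi_0)$. Given this, the oscillatory factor is decisive: with $\Phi'(\eta)=x\q'(\eta)-x_0$, non-stationary phase (integration by parts, exploiting that the Gaussian amplitude is Schwartz) shows the integral is $O(h^\infty)$ whenever $\Phi'(\xi_0)\ne0$, i.e.\ whenever $x\ne x_0/\q'(\xi_0)$. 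At the remaining point $(x,\xi)=(x_0/\q'(\xi_0),\q(\xi_0))=\chi(x_0,\xi_0)$ the phase is stationary at the centre of concentration, and a Laplace/Gaussian evaluation of the leading term shows the modulus is bounded below by a positive constant independent of $h$; hence the image is not microlocally small there. This identifies $\mathrm{WF}_h(\IFT(\varphi_{(x_0,\xi_0)}))=\{\chi(x_0,\xi_0)\}$. The FTDT case is entirely parallel: using \eqref{semiclassicalT} one obtains $\mathcal F_h(\FT(f))(\zeta)=\mathbf 1_{\On}(\zeta)\mathcal F_h(f)(\q(\zeta))$, and the same argument with the roles of $\q$ and $\q^{-1}$ interchanged produces the phase $\Psi(\zeta)=x\zeta-x_0\q(\zeta)$, whose localization and stationarity conditions give exactly $\chi^{-1}(x_0,\xi_0)=(x_0\q'(\q^{-1}(\xi_0)),\q^{-1}(\xi_0))$.

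The main obstacle is making the two competing $h^{-1}$ scalings—Gaussian concentration of width $\sqrt h$ against phase oscillation of frequency $h^{-1}$—interact rigorously rather than heuristically. Concretely, the non-stationary phase estimate must be carried out with the complex, $h$-dependent Gaussian amplitude rather than a fixed smooth cutoff, so one should either absorb the real Gaussian into a steepest-descent contour and estimate the resulting exponent, or first reduce to a compactly supported amplitude by discarding the $O(h^\infty)$ tail of the Gaussian and then integrate by parts. Verifying the lower bound at the critical point requires checking that the stationary point of $\Phi$ coincides with the centre $\xi_0$ of the Gaussian, so that no $e^{-c/h}$ suppression occurs; this is precisely where the relation $x=x_0/\q'(\xi_0)$ enters, and it is the crux that produces the correct spatial coordinate of $\chi$.
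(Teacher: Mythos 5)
Your proposal is correct and follows essentially the same route as the paper: pass to the Fourier side via unitarity of $\mathcal F_h$, reduce to an explicit oscillatory integral whose Gaussian factors force $\xi=\q(\xi_0)$, use non-stationary phase to force $x=x_0/\q'(\xi_0)$, and then a stationary-phase lower bound at the critical point to get equality rather than mere inclusion. The ``obstacle'' you flag about combining the Gaussian amplitude with the oscillation is resolved in the paper exactly as you suggest, by folding both Gaussians into a single complex phase $\Phi$ with $\im\Phi\ge0$ and a non-degenerate critical point and invoking the complex stationary phase theorem.
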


\begin{proof}
We will prove the first identity; the proof of the second is similar and is left to the reader. Changing variables in \eqref{semiclassicalI} we see that
\begin{equation*}
\IFT(\varphi_{(x_0,\xi_0)})=\mathcal F^{-1}_h(\mathbf 1_{\q(\On)} \mathcal F_h(\varphi_{(x_0,\xi_0)})(\q^{-1}(\cdot) )),
\end{equation*}
so an application of the Plancharel formula gives
\begin{align*}
( \IFT(\varphi_{(x_0,\xi_0)}),\varphi_{(x,\xi)})_{L^2(\R)}&=( \mathcal F_h(\IFT(\varphi_{(x_0,\xi_0)})),\mathcal F_h(\varphi_{(x,\xi)}))_{L^2(\R)}\\
&=( \mathbf 1_{\q(\On)} \mathcal F_h(\varphi_{(x_0,\xi_0)})(\q^{-1}(\cdot) ),\mathcal F_h(\varphi_{(x,\xi)}))_{L^2(\R)}.
\end{align*}
In view of \eqref{eq:Fourierwavepacket}, $( \IFT(\varphi_{(x_0,\xi_0)}),\varphi_{(x,\xi)})_{L^2(\R)}$ is therefore equal to the integral
\begin{equation}\label{eq:oscint}
\int_{\q(\On)} e^{-2\pi i x_0\q^{-1}(\eta)/h}e^{-\pi (\q^{-1}(\eta)-\xi_0)^2/h}e^{2\pi i x\eta/h}e^{-\pi (\eta-\xi)^2/h}\,d\eta.
\end{equation}
Due to the quadratic terms in the exponential, this is clearly $O(h^\infty)$ in the semiclassical limit $h\to0$ if there is no $\eta_0$ such that $\eta_0-\xi=\q^{-1}(\eta_0)-\xi_0=0$, i.e., such that $\xi=\eta_0=\q(\xi_0)$.
Writing the remaining oscillatory factors as $e^{2\pi i\phi(\eta)/h}$ with $\phi(\eta)=x\eta-x_0\q^{-1}(\eta)$, it follows from the principle of non-stationary phase  
that the integral is also $O(h^\infty)$ unless $\phi'(\eta_0)=0$, see e.g., \cite[Lemma 3.10]{zworski2012semiclassical}. But $\phi'(\eta_0)=0$ implies that
\begin{equation*}
x=\frac{x_0}{\q'(\q^{-1}(\eta_0))}=\frac{x_0}{\q'(\xi_0)},
\end{equation*}
so $\mathrm{WF}_h(\IFT(\varphi_{(x_0,\xi_0)}))\subset\{(x_0/\q'(\xi_0),\q(\xi_0))\}$. 
That we in fact have equality follows by applying the method of stationary phase to \eqref{eq:oscint} with $(x,\xi)$ determined above. The phase function 
$$
\Phi(\eta)=x_0\bigg(\frac{\eta}{\q'(\xi_0)}-\q^{-1}(\eta)\bigg)+\frac{i}{2}\bigg((\q^{-1}(\eta)-\xi_0)^2+(\eta-\q(\xi_0))^2\bigg)
$$
satisfies $\im\Phi\ge0$ and has a unique critical point at $\eta_0=\q(\xi_0)$, which is non-degenerate since
$$
\Phi''(\eta_0)=x_0\frac{\q''(\xi_0)}{(\q'(\xi_0))^3}+i\bigg(1+\frac{1}{(\q'(\xi_0))^2}\bigg)\ne0.
$$
Multiplying $e^{2\pi i\Phi/h}$ by a cutoff function which is identically $1$ near $\eta_0$ thus gives
$$
( \IFT(\varphi_{(x_0,\xi_0)}),\varphi_{(x,\xi)})_{L^2(\R)}=h^\frac12 e^{2\pi i\Phi(\eta_0)/h}(\Phi''(\eta_0)/i)^{-\frac12}+O(h^\frac32),
$$
see H{\"o}rmander \cite[Theorem 7.7.5]{hormander1983linear}. Since $\lvert e^{2\pi i\Phi(\eta_0)/h}\rvert=1$, the right-hand side is not smaller than $O(h^\frac12)$ as $h\to0$, which yields the result.
\end{proof}

\subsection{Initial conditions}\label{ss:init}

Let $u_i$ be the solution to \eqref{evo} with initial conditions \eqref{init}, so $u_i(t,x)\equiv0$ for $t\le0$. 
By virtue of Theorem \ref{thm:evo}, $\FT(u_i)$ satisfies the corresponding finite difference equation modified to account for time dispersion, namely \eqref{FDevo}. Suppose we want to use \eqref{FDevo} to obtain a sampling of $\FT(u_i)$. This would require adding initial conditions to \eqref{FDevo}, and it is natural to use the same initial conditions \eqref{init} as before. Let therefore $v_i$ be a function satisfying \eqref{FDevo} with initial conditions \eqref{init},
so that
$v_i(t,x)\equiv 0$ for $t\le0$. Then because the equation but not the initial condition for $v_i$ has been modified using the FTDT, this introduces an approximation error between $\FT(u_i)$ and $v_i$. Indeed, $v_i(t,x)\equiv0$ for $t\le 0$ while we have, by the Fourier inversion formula and the definition of $\FT(u_i)$,
\begin{equation}\label{psi_ij}
\FT(u_i)(t,x)
=\int_\Omega e^{2\pi it\omega}\widehat u_i(q(\omega),x)\, d\omega 
\end{equation}
which does not equal 
\begin{equation*}
u_i(t,x)
=\int_{-\infty}^\infty e^{2\pi i t\omega}\widehat u_i(\omega,x)\, d\omega,
\end{equation*}
and thus $\FT(u_i)(t,x)\ne u_i(t,x)\equiv 0$ for $t\le 0$.
We see that the ``correct'' finite difference initial value problem to solve in order to obtain a sampling of $\FT(u_i)$ would be \eqref{FDevo} with initial condition given by \eqref{psi_ij} for $t\le0$. Since this would involve using knowledge of $u_i$ this is obviously not possible in practice. 
On the other hand, since $\FT(u_i)$ and $v_i$ have the same evolution in time according to Theorem \ref{thm:evo}, $\FT(u_i)$ will continue to stay close to $v_i$ for all $t$ as long as $\FT(u_i)(t,x)$ is approximately identically zero for $t\le0$. The accuracy of approximation is ensured by the following lemma, expressed in terms of derivatives $\partial_t^j\FT(u_i)$ of orders $j$ for which the regularity assumption \eqref{regularityassumption} is valid.

To keep the presentation general, we make the assumptions that
\begin{itemize}
\item $\mathbf 1_\Omega(\omega)$ converges pointwise to 1 as $\Delta t\to0$, i.e., $\Omega\subset \R$ exhausts $\R$ in the limit as $\Delta t\to0$, 
\item $q(\omega)$ converges pointwise to $\omega$ as $\Delta t\to0$, 
\item $\lvert q(\omega)\rvert\ge c \lvert \omega\rvert$ for $\omega\in \Omega$ where $c$ is a real-valued constant independent of $\Delta t$.
\end{itemize}
We also assume that $\Omega$ has a decomposition $\Omega=\Omega_\mathrm{inn}\cup \Omega_\mathrm{out}$ consisting of an inner and outer region where  
$\Omega_\mathrm{inn}\to\R$ as $\Delta t\to0$, such that
for some real-valued constants $C_1,C_2,C_3$ independent of $\Delta t$,
\begin{itemize}
\item $q'(\omega)\ge C_1$ if $\omega\in \Omega_\mathrm{inn}$,
\item $\lvert \omega\rvert\ge C_2/\Delta t$ if $\omega\in \Omega_\mathrm{out}$,
\item $\Omega_\mathrm{out}$ has Lebesgue measure $\lvert \Omega_\mathrm{out}\rvert\le C_3/\Delta t$.
\end{itemize}
To illustrate, if $q(\omega)$ is the function described in Example \ref{ex:q} then these assumptions are satisfied with $c=2/\pi$, $\Omega_\mathrm{inn}=\{\omega:\lvert\omega\rvert\le (8\Delta t)^{-1}\}$ and $C_1=1/\sqrt 2$, $C_2=1/8$, $C_3=1/4$.

\begin{lemma}\label{lem:init}
Let $u_i$ solve \eqref{evo}--\eqref{init}. Then
\begin{equation*}
\lim_{\Delta t\to0}\sup_{t\le 0}{\lvert\partial_t^j\FT(u_i)(t,x)\rvert}=0,\quad 0\le j\le n_i-1,
\end{equation*}
and the convergence is uniform with respect to $x\in X$ in the sense of \eqref{regularityassumption}. The rate of convergence depends on the definition of the phase shift function $q$ in \eqref{def:q}.
\end{lemma}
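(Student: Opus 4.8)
The plan is to compare $\FT(u_i)$ against $u_i$ itself on the half-line $t\le0$, where $u_i$ and all its relevant derivatives vanish. First I would record that, since $u_i(\cdot,x)\in H^{n_i}(\R)$ by \eqref{regularityassumption} and $H^{n_i}(\R)\hookrightarrow C^{n_i-1}(\R)$, the derivatives $\partial_t^ju_i(\cdot,x)$ are continuous for $0\le j\le n_i-1$; as $u_i$ vanishes on $(-\infty,0)$ they also vanish at $t=0$, so $\partial_t^ju_i(t,x)=0$ for all $t\le0$ and all such $j$. (This is exactly why the statement is restricted to $j\le n_i-1$.) Hence for $t\le0$ we may write $\partial_t^j\FT(u_i)(t,x)=\partial_t^j\FT(u_i)(t,x)-\partial_t^ju_i(t,x)$, and differentiating under the integral sign in \eqref{FTDT} and in the Fourier inversion formula for $u_i$ gives
\[
\partial_t^j\FT(u_i)(t,x)=\int_\Omega(2\pi i\omega)^je^{2\pi it\omega}\bigl[\widehat u_i(q(\omega),x)-\widehat u_i(\omega,x)\bigr]\,d\omega-\int_{\R\setminus\Omega}(2\pi i\omega)^je^{2\pi it\omega}\widehat u_i(\omega,x)\,d\omega.
\]
Since $\lvert e^{2\pi it\omega}\rvert=1$, taking $\sup_{t\le0}$ removes the $t$-dependence and bounds the left-hand side by the two integrals with the exponential deleted and absolute values moved inside. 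It therefore suffices to show that each of these two $t$-independent quantities tends to $0$ as $\Delta t\to0$, with all bounds expressed through $\sup_x\lvert u_i(\cdot,x)\rvert$ so as to be uniform in $x$ in the sense of \eqref{regularityassumption}.

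The truncation term, over $\R\setminus\Omega$, is the easier one. Here I would apply the Cauchy--Schwarz inequality in the form $\lvert\omega\rvert^j=\lvert\omega\rvert^{j-n_i}\lvert\omega\rvert^{n_i}$, bounding it by $\bigl(\int_{\R\setminus\Omega}\lvert\omega\rvert^{2(j-n_i)}\,d\omega\bigr)^{1/2}$ times a constant multiple of $\lVert\partial_t^{n_i}u_i(\cdot,x)\rVert_{L^2}$. The second factor is finite by \eqref{regularityassumption}, while the first is finite because $2(j-n_i)\le-2<-1$; moreover, since $\R\setminus\Omega$ recedes to infinity as $\Omega$ exhausts $\R$, it is the tail of a convergent integral and tends to $0$. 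This is where the constraint $n_i-j\ge1$ is used.

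The phase-shift term, over $\Omega$, is the crux, and here I would invoke the decomposition $\Omega=\Omega_{\mathrm{inn}}\cup\Omega_{\mathrm{out}}$ supplied before the statement. On $\Omega_{\mathrm{inn}}$, where $q'\ge C_1$ (and $q'$ is bounded above), the change of variables $\xi=q(\omega)$ has Jacobian bounded from below and above, so transferring integrals to the $\xi$-side shows that the family of integrands (extended by $\mathbf 1_{\Omega_{\mathrm{inn}}}$) is uniformly integrable and tight, with tightness following from $\lvert q(\omega)\rvert\ge c\lvert\omega\rvert$ and the integrability of $h(\xi)=\lvert2\pi\xi\rvert^j\lvert\widehat u_i(\xi,x)\rvert$. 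Combined with the pointwise convergences $\mathbf 1_{\Omega_{\mathrm{inn}}}\to1$ and $q(\omega)\to\omega$ and the continuity of $\widehat u_i(\cdot,x)$, a Vitali convergence argument forces this contribution to $0$. On $\Omega_{\mathrm{out}}$ I would instead exploit that $\lvert\omega\rvert\ge C_2/\Delta t$ is large while $\lvert\Omega_{\mathrm{out}}\rvert\le C_3/\Delta t$ is small: using $\lvert q(\omega)\rvert\ge c\lvert\omega\rvert$ and the pointwise bound $\lvert\widehat u_i(q(\omega),x)\rvert\le M_{\Delta t}\,\lvert q(\omega)\rvert^{-n_i}$ with $M_{\Delta t}=\sup_{\lvert\xi\rvert\gtrsim1/\Delta t}\lvert\xi\rvert^{n_i}\lvert\widehat u_i(\xi,x)\rvert$, the $\omega$-integral reduces to $\int_{\Omega_{\mathrm{out}}}\lvert\omega\rvert^{j-n_i}\,d\omega$, whose prefactors yield a nonnegative power of $\Delta t$ while $M_{\Delta t}\to0$ because the supremum is taken over a receding tail; the companion piece with $q(\omega)$ replaced by $\omega$ is handled identically.

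The main obstacle is precisely this uniform control of the phase-shift error, which concentrates near $\partial\Omega$, where $q'$ degenerates (for instance $q'\to0$ at $\omega=\pm1/4\Delta t$ in Example \ref{ex:q}). Isolating $\Omega_{\mathrm{out}}$ is what allows us to avoid a change of variables there, since on that region the Jacobian of $q$ is no longer bounded below. The delicate point is that pushing both the $\Omega_{\mathrm{out}}$ estimate and the tightness on $\Omega_{\mathrm{inn}}$ through requires genuine decay of $\widehat u_i(\cdot,x)$ at infinity, namely $\lvert\xi\rvert^{n_i}\widehat u_i(\xi,x)\to0$, rather than mere square-integrability; this is supplied by \eqref{regularityassumption} together with the assumed decay of the $f_i$. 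It is here that the \emph{rate} of convergence enters, depending through $q$ on how $\Omega$, $q'$ and $\lvert q\rvert$ behave near $\partial\Omega$, which accounts for the final sentence of the statement. The uniformity in $x$ is secured throughout by replacing $\lvert u_i(t,x)\rvert$ by $\sup_x\lvert u_i(t,x)\rvert$ before estimating, so that every bound is controlled by the single function appearing in \eqref{regularityassumption}.
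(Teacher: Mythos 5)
Your proof is correct and follows essentially the same route as the paper's: the same decomposition $\Omega=\Omega_{\mathrm{inn}}\cup\Omega_{\mathrm{out}}$, the same change of variables $\xi=q(\omega)$ with Jacobian bounded below by $C_1$ on the inner region, and the same measure-versus-magnitude estimate on the outer region, with the decay of $\lvert\xi\rvert^{n_i}\lvert\widehat u_i(\xi,x)\rvert$ playing the identical role. The only cosmetic differences are that you treat the truncation over $\R\setminus\Omega$ by an explicit Cauchy--Schwarz estimate where the paper folds it into the convergence of the inner integral to the full integral, and you phrase that convergence via Vitali/uniform integrability where the paper invokes dominated convergence.
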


\begin{proof}
Inspecting the definitions and recalling that $\Omega=\Omega_\mathrm{inn}\cup \Omega_\mathrm{out}$ we see in view of the Fourier inversion formula that the result is proved by showing
\begin{equation}\label{convinitcond}
\lim_{\Delta t\to0}\bigg(\int_{\Omega_\mathrm{inn}}+\int_{\Omega_\mathrm{out}}\bigg) e^{2\pi it\omega}(2\pi i\omega)^j\widehat u_i(q(\omega),x)\, d\omega=\infint e^{2\pi it\omega}(2\pi i\omega)^j\widehat u_i(\omega,x)\, d\omega
\end{equation}
with uniform convergence in $t$. Indeed, the left-hand side is the limit of $\partial_t^j\FT(u_i)(t,x)$ as $\Delta t\to0$ and the right-hand side is equal to $\partial_t^ju_i(t,x)$ which is zero for $t\le0$ by assumption. Note that \eqref{convinitcond} is essentially a consequence of the Lebesgue dominated convergence theorem. For the benefit of the reader we include the details.

Before treating each integral on the left separately we make two observations. First, \eqref{regularityassumption} implies
\begin{equation}\label{Sobineq}
\infint (1+\lvert\omega\rvert^2)^k\lvert\widehat u_i(\omega,x)\rvert^2\, d\omega<\infty,\quad 0\le k\le n_i,
\end{equation}
which means that $\lvert \widehat u_i(\omega,x)\rvert\le g_i(\omega,x)^{1/2}(1+\lvert \omega\rvert^2)^{-n_i/2}$ for some integrable function $\omega\mapsto g_i(\omega,x)$, where $g_i(\omega,x)\to0$ as $\lvert\omega\rvert\to\infty$ by the Riemann-Lebesgue lemma.
Second, by assumption we have $\lvert q(\omega)\rvert\ge c\lvert\omega\rvert$ for $\omega\in \Omega$ with $c$ independent of $\Delta t$, so
\begin{equation}\label{genest}
\int_{\Omega} \lvert(2\pi i\omega)^j\widehat u_i(q(\omega),x)\rvert\, d\omega\le
\bigg(\frac{2\pi}{c}\bigg)^j\int_{\Omega} \lvert  q(\omega)\rvert^j\lvert\widehat u_i(q(\omega),x)\rvert\, d\omega.
\end{equation}

We begin by treating the first integral on the left of \eqref{convinitcond}. Recall that by our standing assumptions, $u_i(t,x)$ is integrable in $t$ which means that $\widehat u_i(\omega,x)$ is continuous in $\omega$, while $\mathbf 1_{\Omega_\mathrm{inn}}(\omega)\to 1$ and $q(\omega)\to\omega$ pointwise as $\Delta t\to0$. Next, note that
$$
\int_{\Omega_\mathrm{inn}} \lvert  q(\omega)\rvert^j\lvert\widehat u_i(q(\omega),x)\rvert\, d\omega=\int_{q(\Omega_\mathrm{inn})}\frac{\lvert\xi\rvert^j\lvert\widehat u_i(\xi,x)\rvert}{q'(q^{-1}(\xi))}\,d\xi\le \frac{1}{C_1}\infint\lvert\xi\rvert^j\lvert\widehat u_i(\xi,x)\rvert\, d\xi
$$
since $\xi\in q(\Omega_\mathrm{inn})$ implies that $\omega=q^{-1}(\xi)\in \Omega_\mathrm{inn}$ for which we have $q'(\omega)\ge C_1$ by assumption. Since $C_1$ is independent of $\Delta t$ and the right-most integral is convergent by \eqref{Sobineq}, Lebesgue's dominated convergence theorem together with \eqref{genest} implies that
$$
\lim_{\Delta t\to0}\int_{\Omega_\mathrm{inn}} e^{2\pi it\omega}(2\pi i\omega)^j\widehat u_i(q(\omega),x)\, d\omega=\infint e^{2\pi it\omega}(2\pi i\omega)^j\widehat u_i(\omega,x)\, d\omega,
$$
uniformly in $t$.

To treat the second integral on the left of \eqref{convinitcond}, note that
$$
\lvert  q(\omega)\rvert^j\lvert\widehat u_i(q(\omega),x)\rvert
\le\frac{\lvert q(\omega)\rvert^j}{(1+\lvert q(\omega)\rvert^2)^{(n_i-1)/2}}\frac{g_i(\omega,x)^{1/2}}{(1+\lvert q(\omega)\rvert^2)^{1/2}}\le \frac{g_i(\omega,x)^{1/2}}{(1+c^2\omega^2)^{1/2}}
$$
for $0\le j\le n_i-1$. 
Since $\lvert \omega\rvert\ge C_1/\Delta t$ when $\omega\in \Omega_\mathrm{out}$ and $\lvert \Omega_\mathrm{out}\rvert\le C_2/\Delta t$, it is then easy to see that
$$
\int_{\Omega_\mathrm{out}} \lvert  q(\omega)\rvert^j\lvert\widehat u_i(q(\omega),x)\rvert\, d\omega\le \frac{C_2}{C_1c}\sup_{\omega\in \Omega_\mathrm{out}}g_i(\omega,x)^{1/2}
$$
with $g_i$ and $c$ independent of $\Delta t$.  Since $g_i(\omega,x)\to0$ as $\lvert\omega\rvert\to\infty$ it follows that the supremum above tends to 0 as $\Delta t\to0$. In view of \eqref{genest} we conclude that
$$
\lim_{\Delta t\to0}\int_{\Omega_\mathrm{out}}e^{2\pi it\omega}(2\pi i\omega)^j\widehat u_i(q(\omega),x)\, d\omega=0,
$$
uniformly in $t$.
This proves \eqref{convinitcond}.
From the proof it is clear that the convergence is uniform with respect to $x\in X$ in the sense of \eqref{regularityassumption}, and that the rate of convergence depends on $q(\omega)$. 
\end{proof}

\subsection{Non-matching finite difference schemes}\label{ss:nonmatching}

Here we briefly discuss what can happen if non-matching finite difference approximations are used to define $\DDj$ in \eqref{eq:defofFDsystem}. 
To highlight the effect we choose a simple prototype of the evolution equation \eqref{evo}: Let $u=(u_1,u_2)$ and consider the system
\begin{align}\label{maineq}
\partial_tu_1(t,x)&=L_{11}u_1(t,x)+L_{12}u_2(t,x),\\
\partial_tu_2(t,x)&=L_{21}u_1(t,x)-Cu_2(t,x),
\label{auxeq}
\end{align}
where the $L_{ij}$ are linear spatial operators independent of $t$ and $C$ is a constant. 
We will as before let $\DDe$ denote a scheme satisfying all prior assumptions and use it to model the time derivative in \eqref{maineq}, but we assume that \eqref{auxeq} is an auxiliary equation and allow a different scheme to model its time derivative. (An example is provided by the usage of memory variables in \eqref{sigma}--\eqref{memory}, where in large-scale supercomputer global seismological simulations it can be desirable that the stress and displacement are updated at every step in time, but the memory variables only every 4 steps in time, which saves computational costs without being dramatically worse in performance.)
Denote it by
$$
\DDt v_i(t,x)=\sum_n \widetilde c_{1,n} v_i(t+n\Delta t,x),
$$
and define 
$$
\widetilde q(\omega) = \frac{1}{2\pi i}\sum_n \widetilde c_{1,n}  e^{2\pi i \omega \Delta t n} .
$$
The next result explains how the discretized equations should be modified in order to accurately model the evolution of a solution to \eqref{maineq}--\eqref{auxeq}.

\begin{proposition}\label{prop:nonmatching}
Let $u=(u_1,u_2)$ be a solution of \eqref{maineq}--\eqref{auxeq}. 
Set
$$
G(t) = \int_\Omega \frac{2\pi i\widetilde q(\omega)+C}{2\pi iq(\omega)+C} e^{2\pi i t\omega } \, d\omega,
$$
and define $v_i=\FT(u_i)$ for $i=1,2$. Then $v=(v_1,v_2)$ solves 
\begin{align}\label{FDmaineq}
\DDe v_1(t,x)&=L_{11}v_1(t,x)+L_{12}v_2(t,x),\\
\DDt v_2(t,x)&=L_{21} v_1(\cdot,x)\ast G(t)-Cv_2(t,x),
\label{FDauxeq}
\end{align}
for each value of $t$, where $\ast$ denotes time convolution.
\end{proposition}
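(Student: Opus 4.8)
The plan is to mimic the Fourier-side argument used in the proof of Theorem~\ref{thm:evo}, the only novelty being the bookkeeping needed to accommodate the two different phase shift functions $q$ and $\widetilde q$. First I would apply the partial Fourier transform in $t$ to the system \eqref{maineq}--\eqref{auxeq} and evaluate the resulting identities at $q(\omega)$ in place of $\omega$, exactly as in \eqref{Fourierevo}. This produces the two algebraic relations
$$
2\pi iq(\omega)\widehat u_1(q(\omega),x)=L_{11}\widehat u_1(q(\omega),x)+L_{12}\widehat u_2(q(\omega),x)
$$
and, after collecting the $\widehat u_2$ terms,
$$
(2\pi iq(\omega)+C)\widehat u_2(q(\omega),x)=L_{21}\widehat u_1(q(\omega),x).
$$
Throughout I would use the defining relation $\widehat v_i(\omega,x)=\mathbf 1_\Omega(\omega)\widehat u_i(q(\omega),x)$ coming from the definition of $\FT$.

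Verifying \eqref{FDmaineq} is then immediate: taking Fourier transforms and using \eqref{FourierDj}--\eqref{def:q} gives $\mathcal F(\DDe v_1)(\omega)=2\pi iq(\omega)\widehat v_1(\omega,x)$, which for $\omega\in\Omega$ equals $L_{11}\widehat v_1(\omega,x)+L_{12}\widehat v_2(\omega,x)$ by the first algebraic relation, while outside $\Omega$ both sides vanish because each $\widehat v_i$ is supported in $\Omega$. An application of the Fourier inversion formula recovers \eqref{FDmaineq}.

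The substance lies in \eqref{FDauxeq}. On the left I would compute $\mathcal F(\DDt v_2)(\omega)=2\pi i\widetilde q(\omega)\widehat v_2(\omega,x)$, using the analogue of \eqref{def:q} for $\widetilde q$. On the right I would invoke the convolution theorem: since $G$ is by construction the inverse Fourier transform of $\mathbf 1_\Omega(\omega)(2\pi i\widetilde q(\omega)+C)/(2\pi iq(\omega)+C)$, one has
$$
\mathcal F(L_{21}v_1(\cdot,x)\ast G)(\omega)=L_{21}\widehat v_1(\omega,x)\,\mathbf 1_\Omega(\omega)\frac{2\pi i\widetilde q(\omega)+C}{2\pi iq(\omega)+C},
$$
while $\mathcal F(-Cv_2)(\omega)=-C\widehat v_2(\omega,x)$. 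Restricting to $\omega\in\Omega$ and inserting the second algebraic relation in the form $\widehat v_2(\omega,x)=L_{21}\widehat u_1(q(\omega),x)/(2\pi iq(\omega)+C)$, the sum of the two terms telescopes: the additive constant $C$ in the numerator of $\widehat G$ cancels precisely against the $-C\widehat v_2$ contribution, leaving
$$
2\pi i\widetilde q(\omega)\frac{L_{21}\widehat u_1(q(\omega),x)}{2\pi iq(\omega)+C}=2\pi i\widetilde q(\omega)\widehat v_2(\omega,x),
$$
which matches the left-hand side. The Fourier inversion formula then yields \eqref{FDauxeq}.

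The main obstacle is conceptual rather than computational: one must recognize that the mismatch between $\DDe$ and $\DDt$ can be absorbed into a single frequency multiplier, namely the ratio $(2\pi i\widetilde q(\omega)+C)/(2\pi iq(\omega)+C)$, whose denominator is forced by solving the $C$-damped auxiliary equation \eqref{auxeq} for $\widehat u_2$ and whose numerator is dictated by the $\widetilde q$-scheme. Once $G$ is identified as the corresponding convolution kernel, the cancellation of $C$ in the final step is exactly what makes this choice of $G$ correct; the support considerations are routine since every quantity involved is supported in $\Omega$.
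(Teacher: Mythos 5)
Your proposal is correct and follows essentially the same route as the paper's proof: Fourier transform the continuous system, evaluate at $q(\omega)$, use $\widehat v_i(\omega,x)=\mathbf 1_\Omega(\omega)\widehat u_i(q(\omega),x)$, and identify the right-hand side of \eqref{FDauxeq} via the convolution theorem with $\widehat G(\omega)=\mathbf 1_\Omega(\omega)(2\pi i\widetilde q(\omega)+C)/(2\pi iq(\omega)+C)$. The only cosmetic difference is that the paper groups $\DDt v_2+Cv_2$ on one side before comparing, whereas you verify the cancellation of the $C$ terms directly; the computation is identical.
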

\begin{proof}
Note that $G$ is well defined since $q$ is real-valued and the integration domain is a compact set. Also,
\begin{equation}\label{Ghat}
\widehat G(\omega)=\mathbf 1_\Omega(\omega)\frac{2\pi i\widetilde q(\omega)+C}{2\pi iq(\omega)+C}.
\end{equation}
Fix $x$ and suppress it from the notation, and let $L_j$ be the $1\times 2$ system $L_j=(L_{j1},L_{j2})$, $j=1,2$. Using the Fourier inversion formula and the definition of $v_1$ we have
\begin{align*}
\DDe v_1(t)-L_1v(t) &=
\int_\Omega  \left[(2\pi i q(\omega) -L_{11})\widehat{u}_1(q(\omega))-L_{12}\widehat{u}_2(q(\omega))\right]  e^{2\pi i t\omega} \, d \omega.
\end{align*}
Taking a Fourier transform of \eqref{maineq} and evaluating at $q(\omega)$ shows that
$$
\left[(2\pi iq(\omega)-L_{11})\widehat u_1(q(\omega))-L_{12}\widehat{u}_2(q(\omega))\right]=0,
$$
so $v_1$ solves \eqref{FDmaineq}.

To prove that $v_2$ solves \eqref{FDauxeq}, we observe that
\begin{equation}\label{observedformula}
\widehat u_2(q(\omega))=\frac{1}{2\pi iq(\omega)+C}L_{21}\widehat u_1(q(\omega)).
\end{equation}
This formula is easily obtained by taking a Fourier transform of \eqref{auxeq}, solving for $\widehat u_2$ and evaluating the result at $q(\omega)$. It follows that
$$
\DDt v_2(t)+Cv_2(t)=\infint (2\pi i \widetilde q(\omega)+C)\widehat v_2(\omega)e^{2\pi i t\omega } \, d\omega.
$$
Since $\widehat v_2(\omega)=\mathbf 1_\Omega(\omega)\widehat u_2(q(\omega))$ we find in view of \eqref{Ghat} and \eqref{observedformula} that
$$
\DDt v_2(t)+Cv_2(t)=\infint \widehat G(\omega)L_{21}\widehat u_1(q(\omega))\mathbf 1_\Omega(\omega)e^{2\pi i t\omega } \, d\omega.
$$
Since $\widehat u_1(q(\omega))\mathbf 1_\Omega(\omega)=\widehat v_1(\omega)$, this is equivalent to \eqref{FDauxeq} by virtue of the Fourier inversion formula.
\end{proof}

Proposition \ref{prop:nonmatching} shows that the price one has to pay for using different finite difference schemes to approximate the time derivatives in \eqref{maineq} and in \eqref{auxeq}, is the appearance of a convolution in \eqref{FDauxeq}. Ignoring the convolution results in an approximation of the desired evolution that can be estimated in terms of the amount by which \eqref{Ghat} differs from 1.

Note that if the constant $C$ in \eqref{FDauxeq} is replaced by a spatial operator $L_{22}$ which, while independent of $t$, is not simply constant in $x$ then the previous result has to be modified accordingly. By minor changes, the proof of Proposition \ref{prop:nonmatching} shows that the result remains valid if $G$ is replaced by
$$
G(t) = \int_\Omega \frac{\widetilde q(\omega)}{q(\omega)} e^{2\pi i t\omega } \, d\omega,
$$
and \eqref{FDauxeq} is replaced by
$$
\DDt v_2(t,x)=(L_{21} v_1(\cdot,x)+L_{22} v_2(\cdot,x))\ast G(t).
$$
We remark that $G$ is well defined due to the assumption that $\lvert q(\omega)\rvert\ge c\lvert\omega\rvert$ for $\omega\in\Omega$.

\section{Viscoelastic finite difference equations}\label{3dvisco}

Here we discuss the removal of time dispersion from 2D and 3D viscoelastic finite difference modeling for a specific leapfrog scheme developed in \cite{robertsson1994viscoelastic} (see Bohlen \cite{bohlen2002parallel} for an explicit implementation).
Recall \eqref{Newton2nd3D}--\eqref{memory}.
The time derivative of a function is approximated by
\begin{equation}\label{eq:BohlenD}
\DDe f(t,x)=\frac{f(t+\frac{1}{2}\Delta t)-f(t-\frac{1}{2}\Delta t)}{\Delta t}.
\end{equation}
In this case, the phase shift function $q(\omega)$ is found to be
\begin{equation}\label{qBohlen}
q(\omega)=\frac{ \sin(\pi\omega\Delta t)}{\pi\Delta t},
\end{equation}
which is invertible for 
$\omega\in\Omega$ where $\Omega=[-\frac{1}{2\Delta t},\frac{1}{2\Delta t}]$, see Example \ref{ex:2ndorder}. Here the upper limit $1/2\Delta t$ coincides with the Nyquist frequency which is an improvement compared to finite difference scheme employed in 
Section \ref{section:numsim}. The drawback is the need to use a time average of the memory variables as described below.

Let $\M f(t,x)$ denote the time average
\begin{equation}\label{average}
\M f(t,x)=\tfrac{1}{2}(f(t+\tfrac{1}{2}\Delta t,x)+f(t-\tfrac{1}{2}\Delta t,x))
\end{equation}
of a function $f(t,x)$.
Equations \eqref{Newton2nd3D}--\eqref{memory} are discretized in time by
\begin{equation}\label{app:FDNewton3D}
\rho\DDe V_i=\partial_j\Sigma_{ij}+g_i,
\end{equation}
together with
\begin{align}\label{app:FDsigma}
\DDe\Sigma_{ij}&=\partial_kV_k\{\pi(1+\tau^p)-2\mu(1+\tau^s)\}+2\partial_jV_i\mu(1+\tau^p)\\&\quad+\frac{1}{N}\sum_{n=1}^N\M R_{ij n}\quad \text{if }i=j,\notag\\
\DDe\Sigma_{ij}&=\big(\partial_jV_i+\partial_iV_j\big)\mu(1+\tau^s)+\frac{1}{N}\sum_{n=1}^N\M R_{ij n}\quad \text{if }i\ne j,
\notag
\end{align}
and
\begin{align}\label{app:FDmemory}
\DDe R_{ijn}&=-\frac{1}{\tau_{\sigma n}}\left\{ \M R_{ijn}+(\pi\tau^p-2\mu\tau^s)\partial_kV_k+2\partial_jV_i\mu\tau^s\right\}\quad\text{if }i=j,\\
\DDe R_{ijn}&=-\frac{1}{\tau_{\sigma n}}\left\{ \M R_{ijn}+\mu\tau^s(\partial_jV_i+\partial_iV_j)\right\}\quad\text{if }i\ne j.\notag
\end{align}
If in addition the spatial derivatives are discretized using a fourth-order staggered forward operator and backward operator as is done by Bohlen \cite{bohlen2002parallel}, one arrives at the discrete equations \cite[(A.3)--(A.17)]{bohlen2002parallel} after some straightforward calculations. (In this paper we have extended it to a twelfth-order scheme.) We then have the following.

\begin{theorem}\label{thm:memory}
Let $v_i$ and $\sigma_{ij}$ solve \eqref{Newton2nd3D}--\eqref{sigma}, with memory variables solving \eqref{memory}. 
Define $V_i=\FT(v_i)$, $\Sigma_{ij}=\FT(\sigma_{ij})$ and $g_i=\FT(f_i)$.
Then for each value of $t$, $V_i$ and $\Sigma_{ij}$ solve \eqref{app:FDNewton3D} exactly and \eqref{app:FDsigma} approximately, where the $R_{ijn}$ are exact solutions to \eqref{app:FDmemory}. In the numerical simulations of Section \ref{section:viscoelastic}, the approximation error is $O(\Delta t^2)$. 
\end{theorem}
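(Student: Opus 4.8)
The plan is to apply Theorem \ref{thm:evo} to the full coupled system \eqref{Newton2nd3D}--\eqref{memory}, which is of the form \eqref{evo} once the particle velocities, stresses, and memory variables are assembled into a single vector (as explained at the end of \S\ref{ss:memory}). Theorem \ref{thm:evo} then shows that the triple $(V_i,\Sigma_{ij},\FT(r_{ijn}))$ satisfies \emph{exactly} the ``naive'' discretization in which every $\partial_t$ is replaced by $\DDe$ and no time averaging is applied. Since Newton's law \eqref{app:FDNewton3D} involves no averaged memory term, it coincides with its naive counterpart, so $V_i$ and $\Sigma_{ij}$ solve \eqref{app:FDNewton3D} exactly. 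The remaining task is to control the defect caused by the time average $\M$ that appears in front of the memory variables in \eqref{app:FDsigma} and \eqref{app:FDmemory}.

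The two symbol identities needed are $\mathcal F(\DDe f)(\omega)=2\pi i q(\omega)\widehat f(\omega)$, from \eqref{FourierDj} and \eqref{qBohlen}, and $\mathcal F(\M f)(\omega)=\cos(\pi\omega\Delta t)\widehat f(\omega)$, which follows directly from \eqref{average}. Given the $V_i$, I would \emph{define} $R_{ijn}$ as the (unique) solution of the implicit linear difference equation \eqref{app:FDmemory}; with this choice \eqref{app:FDmemory} holds exactly by construction. Taking a partial Fourier transform of \eqref{app:FDmemory} and solving for $\widehat R_{ijn}$ writes it as a transfer function applied to the strain forcing, with denominator $2\pi i q(\omega)+\cos(\pi\omega\Delta t)/\tau_{\sigma n}$. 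The crucial observation is that the strain forcing contains only spatial operators applied to $V$, so under $\mathcal F$ it reduces to the Fourier transform of the corresponding continuous forcing evaluated at $q(\omega)$; hence Fourier transforming the continuous memory equation \eqref{memory} and evaluating at $q(\omega)$ shows that $\FT(r_{ijn})$ solves the \emph{same} relation but with $\cos(\pi\omega\Delta t)$ replaced by $1$ in the denominator.

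Subtracting the two transfer functions produces a difference proportional to $\cos(\pi\omega\Delta t)-1$, so both $R_{ijn}-\FT(r_{ijn})$ and $\M R_{ijn}-\FT(r_{ijn})$ carry this factor in their Fourier symbols. Because $\cos(\pi\omega\Delta t)-1=O((\omega\Delta t)^2)$, and because in the simulations of Section \ref{section:viscoelastic} the Ricker source \eqref{eq:rickerwavelet} confines the frequency content to a band bounded independently of $\Delta t$, this factor is uniformly $O(\Delta t^2)$ on the relevant frequencies. Writing $\M R_{ijn}=\FT(r_{ijn})+O(\Delta t^2)$ and substituting into the stress--strain relation \eqref{app:FDsigma}, while using that the naive version of \eqref{app:FDsigma}, with $\FT(r_{ijn})$ in place of $\M R_{ijn}$, holds exactly by Theorem \ref{thm:evo}, yields that \eqref{app:FDsigma} is satisfied with an $O(\Delta t^2)$ defect.

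I expect the main obstacle to be the passage from the pointwise bound $\cos(\pi\omega\Delta t)-1=O((\omega\Delta t)^2)$ to a genuine $O(\Delta t^2)$ estimate for the defect, rather than the algebra. This requires first checking that the two denominators are bounded away from zero uniformly on the relevant band---which holds since $q(\omega)$ and $\cos(\pi\omega\Delta t)$ never vanish simultaneously for $\omega\in\Omega$ and $\tau_{\sigma n}>0$, so that for small $\Delta t$ each denominator is close to $2\pi i\omega+1/\tau_{\sigma n}$---and then using the frequency localization of the source to turn the pointwise symbol estimate into an error bound in the norm in which the defect is measured. Since all coefficients are independent of $t$, the transfer-function manipulations are legitimate; the same computation would fail for time-dependent coefficients, consistent with Remark \ref{rmk:nonmatching} and the discussion in Appendix \ref{ss:FIO}.
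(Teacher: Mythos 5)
Your proposal is correct and follows essentially the same route as the paper's proof: Newton's law is exact because it contains no time average, the $R_{ijn}$ are taken as exact solutions of the averaged discrete memory equation, and a Fourier/transfer-function comparison shows that $\M R_{ijn}$ differs from $\FT(r_{ijn})$ by a symbol proportional to $1-\cos(\pi\omega\Delta t)=O((\omega\Delta t)^2)$, which becomes $O(\Delta t^2)$ on the $\Delta t$-independent frequency band observed in the simulations (the paper justifies this band via the recorded spectrum of the memory variables in Figure \ref{figure:mv1} rather than directly via the Ricker source, but the substance is the same).
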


Before the proof we recall from \S\ref{ss:memory} that, according to Theorem \ref{thm:evo}, the functions $V_i$, $\Sigma_{ij}$ and $R_{ijn}$ are exact solutions to the equations obtained by removing all occurrences of the averaging operator $\M$ from \eqref{app:FDNewton3D}--\eqref{app:FDmemory}.

\begin{proof}
We will keep $x$ fixed and omit it from the notation. We prove the statements when $i\ne j$, the other case being similar. We first observe that for a function $f(t)$ we have
\begin{equation}\label{Fourieraverage}
\widehat{\M f}(\omega)=\widehat f(\omega)\cos(\pi\omega\Delta t).
\end{equation}
We also record the fact that if $v_i$ and $\sigma_{ij}$ solve \eqref{Newton2nd3D}--\eqref{memory} then 
\begin{gather}\label{FourierNewton2nd3D}
\rho(2\pi i\omega)\widehat{v}_i(\omega)=\partial_j\widehat\sigma_{ij}(\omega)+\widehat f_i(\omega),
\\
\label{Fouriersigma}
(2\pi i\omega)\widehat{\sigma}_{ij}(\omega)=\big(\partial_j\widehat v_i(\omega)+\partial_i\widehat v_j(\omega)\big)\mu(1+\tau^s)+\frac{1}{N}\sum_{n=1}^N\widehat r_{ij n}(\omega),
\end{gather}
where
\begin{equation}\label{Fouriermemory}
\widehat{r}_{ijn}(\omega)=-\frac{\mu\tau^s(\partial_j\widehat{v}_i(\omega)+\partial_i\widehat v_j(\omega))}{1+2\pi i\omega\tau_{\sigma n}},\quad 1\le n\le N,
\end{equation}
which follows from \eqref{memory} and a straightforward computation.

By definition, $\widehat{V}_i(\omega)=\mathbf 1_\Omega (\omega) \widehat{v}_i(q(\omega))$. Similar formulas hold for $\Sigma_{ij}$ and $g_i$. Hence,
$$
\rho\,\DDe V_i(t)-\partial_j\Sigma_{ij}(t) = \int_\Omega \big[\rho(2\pi i q(\omega)) \widehat{v}_i(q(\omega))-\partial_j\widehat{\sigma}_{ij}(q(\omega))\big]  e^{2\pi i t\omega} \, d \omega
$$
by the Fourier inversion formula.
Using \eqref{FourierNewton2nd3D} evaluated at $q(\omega)$ instead of $\omega$ we see that the right-hand side is equal to $\int_\Omega \widehat{f}_i(q(\omega)) e^{2\pi i t\omega} \, d \omega=g_i(t)$, which proves that \eqref{app:FDNewton3D} holds.

Next, write
\begin{multline*}
\DDe \Sigma_{ij}(t)-\big(\partial_jV_i(t)+\partial_iV_j(t)\big)\mu(1+\tau^s)\\=\int_\Omega \big[2\pi i q(\omega) \widehat{\sigma}_{ij}(q(\omega))-\big(\partial_j\widehat v_i(q(\omega))+\partial_i\widehat v_j(q(\omega))\big)\mu(1+\tau^s)\big]  e^{2\pi i t\omega} \, d \omega.
\end{multline*}
Applying \eqref{Fouriersigma} evaluated at $q(\omega)$ instead of $\omega$ we get
\begin{equation}\label{proofi=/j}
\DDe \Sigma_{ij}(t)-\big(\partial_jV_i(t)+\partial_iV_j(t)\big)\mu(1+\tau^s)
=\int_\Omega\Bigg( \frac{1}{N}\sum_{n=1}^N \widehat{r}_{ijn}(q(\omega)) \Bigg)  e^{2\pi i t\omega} \, d \omega. 
\end{equation}
We now take a Fourier transform in $t$ of \eqref{app:FDmemory}. Using \eqref{Fourieraverage}, elementary computations show that
$$
\widehat{R}_{ijn}(\omega)=-\frac{\mu\tau^s(\partial_j\widehat V_i(\omega)+\partial_i\widehat V_j(\omega))}{\cos(\pi\omega\Delta t)+2\pi iq(\omega)\tau_{\sigma n}}
=\widehat{r}_{ijn}(q(\omega))\frac{1+2\pi iq(\omega)\tau_{\sigma n}}{\cos(\pi\omega\Delta t)+2\pi iq(\omega)\tau_{\sigma n}}
$$
for $\omega\in\Omega$, where the last identity follows by inserting the definition of $V_i$ and inspecting \eqref{Fouriermemory}.
Using \eqref{Fourieraverage} again it is straightforward to check that
$$
\widehat{r}_{ijn}(q(\omega))=\widehat{\M R}_{ijn}(\omega)+\widehat{R}_{ijn}(\omega)(1-\cos(\pi\omega\Delta t))\frac{2 i\sin(\pi\omega\Delta t)\tau_{\sigma n}}{\Delta t+2 i\sin(\pi\omega\Delta t)\tau_{\sigma n}}.
$$
where the last factor is uniformly bounded for $\omega\in\Omega$, and the second factor is $O(\Delta t^2)$ when $\omega$ is restricted to a bounded, $\Delta t$-independent set.
In the simulations in Section \ref{section:viscoelastic} it turns out that $\widehat R_{ijn}(\omega)$ is indeed supported in a $\Delta t$-independent set, see Figure \ref{figure:mv1}. In view of \eqref{proofi=/j} we thus conclude that 
$$
\DDe \Sigma_{ij}(t)-\big(\partial_jV_i(t)+\partial_iV_j(t)\big)\mu(1+\tau^s)
=\int_\Omega\bigg( \frac{1}{N}\sum_{\ell=1}^N \widehat{\M R}_{ijn}(\omega) \bigg)  e^{2\pi i t\omega} \, d \omega+O(\Delta t^2).
$$
The result now follows by applying the Fourier inversion formula to the integral on the right.
\end{proof}

Naturally, there are also versions of Theorems \ref{thm:inverse} and \ref{thm:mainthm} corresponding to Theorem \ref{thm:memory}, as well as a version of the converse statement in Theorem \ref{thm:evo}. We leave for the reader to fill in the details.

\begin{figure}
\centering
\includegraphics{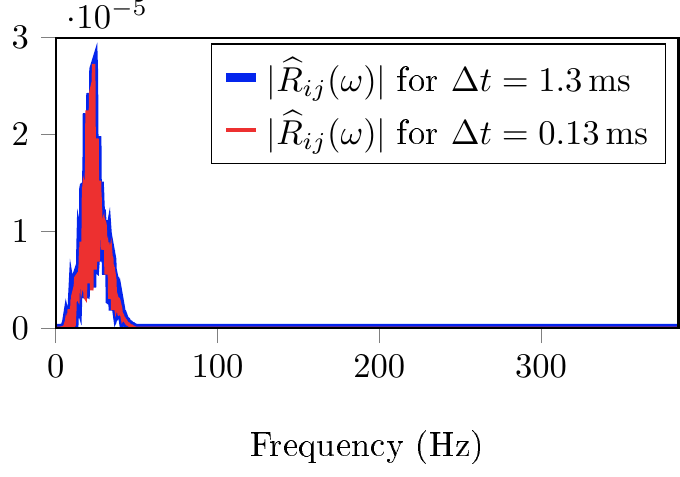}
\caption{The spectrum of the summed, dispersed memory variables $R_{ij}=\sum_n R_{ijn}$ as recorded at $(z,x)=(262.5,62.5)$ in the viscoelastic simulations from Section \ref{section:viscoelastic}. Note how the energy fits well within the Nyquist frequency $1/(2\cdot0.0013)=385\,\mathrm{Hz}$, even for the coarsest possible time-step $\Delta t=1.3\,\mathrm{ms}$. Moreover, the energy remains limited to about $40\,\mathrm{Hz}$, also for simulations with a smaller $\Delta t=0.13\,\mathrm{ms}$. \label{figure:mv1}}
\end{figure}

\section{Implementation}\label{app:matlab}
Here we show how to implement the discrete dispersion transforms in MATLAB in two specific cases, namely the finite difference scheme from Example \ref{ex:implementation2} that is used in the numerical simulations of Section \ref{section:numsim},  and the leapfrog scheme from Appendix \ref{3dvisco} that is used in the viscoelastic simulations of Section \ref{section:viscoelastic}. The interested reader should then be able to adapt the procedure to other cases without difficulty.

\subsection{Central difference scheme}\label{ss:centralFD}
Consider the finite difference operator from Example \ref{ex:implementation2} and
recall \eqref{discreteITDTfast}. We see by inspection that we can view $(\ITDT (f_n))_{k}$ in matrix terms as row $k+1$ of a matrix $A$ applied to 
$\tilde f=(f_0,\ldots,f_{N-1},0,\ldots,0)$,
where $A=(a_{(k+1)(n+1)})_{k,n=0}^{2N-1}$ is the matrix with element
$$
a_{(k+1)(n+1)}=\frac{1}{2N}\sum_{m=-N/2}^{N/2}e^{-2\pi i nm/2N}e^{ i k\sin(\pi m/N) }\cos(\pi m/N)
$$
at position $(k+1)(n+1)$. We may view the sum as ranging over $-N\le m\le N-1$ with terms for $-N/2+1\le \lvert m\rvert\le N$ being zero; in particular the term for $m=-N$ is zero, and so would the term with $m=N$ be. Changing variables $m\mapsto -m$ we thus see that this is the inverse discrete Fourier transform of $m\mapsto g_k(m)$ evaluated at $n$, where
$$
g_k(m)=\begin{cases}e^{-i k\sin(\pi m/N)}\cos(\pi m/N)&\text{ for $\lvert m\rvert \le N/2$},\\ 0&\text{ otherwise.}\end{cases}
$$
$A$ can therefore be computed by applying the inverse FFT to the matrix with columns  $g_{k+1}$ and taking real transpose, e.g., via
\lstinputlisting{finite_difference_ITDT.m}
where we also take advantage of conjugate symmetry.
The last line truncates the matrix so it can be applied directly to the original vector $f$ without having to zeropad the sample manually as this is already built in.

Similarly, by inspecting \eqref{discreteFTDTfast} we see that we can view $(\FTDT (f_n))_{k}$ as row $k+1$ of a matrix $B$ applied to $\tilde f=(f_0,\ldots,f_{N-1},0,\ldots,0)$, where $B=(b_{(k+1)(n+1)})_{k,n=0}^{2N-1}$ is the matrix with element
$$
b_{(k+1)(n+1)}=\frac{1}{2N}\sum_{m=-N/2}^{N/2}e^{ -i n\sin(\pi m/N) }e^{2\pi i mk/2N}
$$
at position $(k+1)(n+1)$. As before we may view the sum as ranging over $-N\le m\le N-1$ with terms for $-N/2+1\le \lvert m\rvert\le N$ being zero. We thus see that this is the inverse discrete Fourier transform of $m\mapsto h_n(m)$ evaluated at $k$, where 
$$
h_n(m)=\begin{cases}e^{-i n\sin(\pi m/N)}&\text{ for $\lvert m\rvert \le N/2$},\\ 0&\text{ otherwise.}\end{cases}
$$
$B$ can therefore be computed (without taking transpose) by applying the inverse FFT to the matrix with columns $h_{n+1}$, e.g., via
\lstinputlisting{finite_difference_FTDT.m}
As before, the last line truncates the matrix thus avoiding the need to zeropad the sample $f$ manually. The matrices $A$ and $B$ are depicted in Figure \ref{fig:matlab}.

\begin{figure}
\centering
\includegraphics{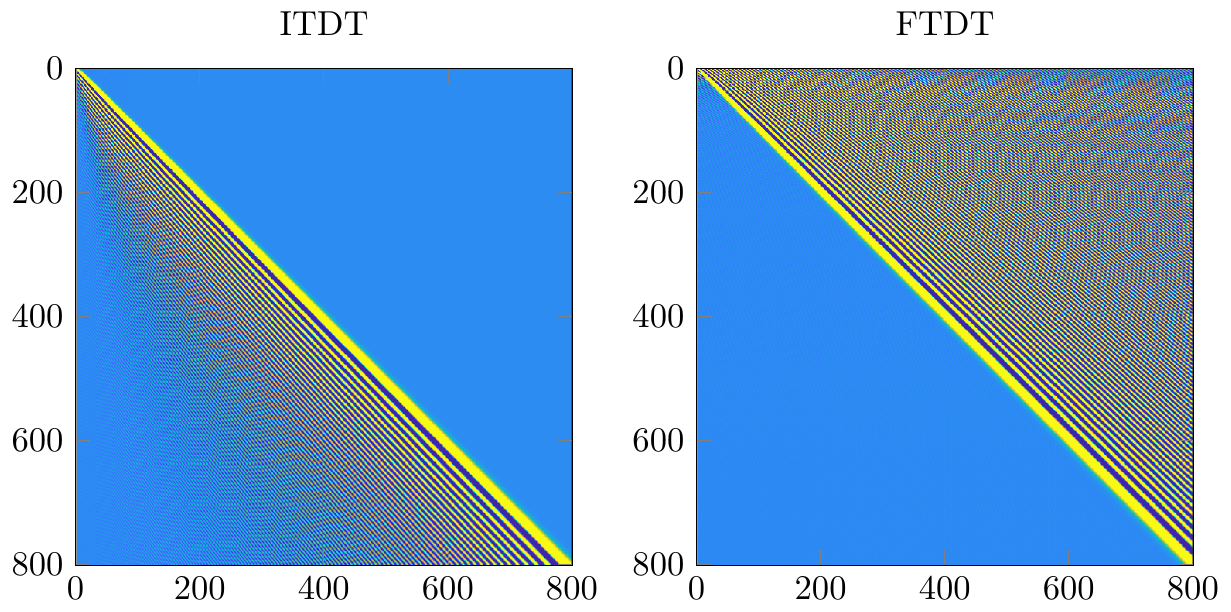}
\caption{Visual illustration of matrix representations of the inverse time dispersion transform (left) and the forward time dispersion transform (right). \label{fig:matlab}}
\end{figure}

\subsection{Leapfrog scheme}\label{ss:leapfrog}
Consider now the leapfrog scheme from Appendix \ref{3dvisco} and recall that in this case, the phase shift function is $q(\omega)=\sin(\pi\omega\Delta t)/\pi\Delta t$ by \eqref{qBohlen}, so $q$ is invertible for $\omega\in\Omega$ where $\Omega=[-\frac{1}{2\Delta t},\frac{1}{2\Delta t}]$. We remark that this is not the same as replacing $\Delta t$ by $\Delta t/2$ in the previous subsection since the time-step size for each individual function is in fact $\Delta t$. Repeating the arguments in Example \ref{ex:implementation2} for this choice of $q$ we find by inserting
$$
\omega_m=\frac{1}{2\Delta t}\frac{m}{N},\quad m=-N+1,\ldots,N,
$$
into \eqref{discreteITDT} that
\begin{equation*}
(\ITDT (f_n))_k=\frac{1}{2N}\sum_{m=-N+1}^{N}\Bigg(\sum_{n=0}^{N-1}e^{-2\pi i nm/2N}f_n\Bigg)e^{2i k\sin(\pi m/2N) }\cos(\pi m/2N).
\end{equation*}
Here the inner sum is the value at $m$ of the discrete Fourier transform of the vector $(f_0,\ldots,f_{N-1})$ zeropadded to twice the length. The outer sum is the value at $k$ of a modified discrete inverse Fourier transform (without truncation). This is the image of $f$ under the action of row $k+1$ of the matrix $A=(a_{(k+1)(n+1)})_{k,n=0}^{2N-1}$ with element
$$
a_{(k+1)(n+1)}=\frac{1}{2N}\sum_{m=-N+1}^{N}e^{-2\pi i nm/2N}e^{ 2i k\sin(\pi m/2N) }\cos(\pi m/2N)
$$
at position $(k+1)(n+1)$. Changing variables $m\mapsto -m$ we identify this as the inverse discrete Fourier transform of $m\mapsto g_k(m)$ evaluated at $n$, where
$$
g_k(m)=e^{-2i k\sin(\pi m/2N)}\cos(\pi m/2N)\quad\text{ for $m=-N,\ldots, N-1$}.
$$
$A$ can be computed by applying the inverse FFT to the matrix with columns  $g_{k+1}$ and taking real transpose via
\lstinputlisting{leapfrog_ITDT.m}

Next, by inserting the expression for $\omega_m$ (shifted one index) into \eqref{discreteFTDT} we find that
\begin{equation*}
(\FTDT (f_n))_k=\frac{1}{2N}\sum_{m=-N}^{N-1}\Bigg(\sum_{n=0}^{N-1}e^{-2i n\sin(\pi m/2N) }f_n\Bigg)e^{2\pi i mk/2N }.
\end{equation*}
Thus we can view $(\FTDT (f_n))_{k}$ as row $k+1$ of a matrix $B$ applied to $(f_0,\ldots,f_{N-1})$ zeropadded to twice the length, where $B=(b_{(k+1)(n+1)})_{k,n=0}^{2N-1}$ has element
$$
b_{(k+1)(n+1)}=\frac{1}{2N}\sum_{m=-N}^{N-1}e^{ -2i n\sin(\pi m/2N) }e^{2\pi i mk/2N}
$$
at position $(k+1)(n+1)$. This is the inverse discrete Fourier transform of $m\mapsto h_n(m)$ evaluated at $k$, where 
$$
h_n(m)=e^{-2i n\sin(\pi m/2N)}\quad\text{ for $m=-N,\ldots, N-1$}.
$$
$B$ can be computed by applying the inverse FFT to the matrix with columns $h_{n+1}$ via
\lstinputlisting{leapfrog_FTDT.m}

\bibliographystyle{amsplain}
\bibliography{referenser}

\end{document}